\documentclass[11pt]{article}
\usepackage[T1]{fontenc}
\usepackage[utf8]{inputenc}
\usepackage{lmodern}
\usepackage[margin=1in]{geometry}
\usepackage{amsmath,amssymb,amsthm,mathtools}
\usepackage{microtype}
\usepackage{xcolor}
\usepackage[colorlinks=true,linkcolor=blue!45!black,citecolor=blue!45!black,
  urlcolor=blue!45!black]{hyperref}
\usepackage[nameinlink,noabbrev,capitalise]{cleveref}
\allowdisplaybreaks[2]
\numberwithin{equation}{section}
\newtheorem{theorem}{Theorem}[section]
\newtheorem{lemma}[theorem]{Lemma}

\newtheorem{corollary}[theorem]{Corollary}
\theoremstyle{remark}
\newtheorem{remark}[theorem]{Remark}
\newcommand{\E}[1]{\mathbb{E}_{#1}}

\newcommand{\calP}{\mathcal P}
\hypersetup{
  pdftitle={Sharper Zarankiewicz and Diagonal Bipartite Ramsey Bounds},
  pdfauthor={Dhruv Mubayi},
  pdfsubject={Dense Zarankiewicz and diagonal bipartite Ramsey bounds}
}

\title{Sharper Zarankiewicz and Diagonal Bipartite Ramsey Bounds}
\author{Dhruv Mubayi\thanks{%
		Department of Mathematics, Statistics and Computer Science,
		University of Illinois Chicago, Chicago, IL 60607.
		Email: \texttt{mubayi@uic.edu}.
		Research partially supported by NSF Awards
		DMS-2552740 and DMS-2153576.}}

\date{September 26, 2026}

\begin{document}
\maketitle

\begin{abstract}
	We prove that there is an absolute positive constant $c$ such that every bipartite graph with $N$ vertices
	in each part and at least $N^2/2$ edges contains a complete bipartite graph $K_{t,t}$ whenever
	$N\ge c\, 2^{t}$. This  improves the classical K\H{o}v\'ari-S\'os-Tur\'an bound requiring $N$ of order $t\,2^t $. As a consequence,  the diagonal bipartite
	Ramsey number has upper bound $b(t,t)=O(2^t)$, improving the previous best  bound $b(t,t) = O(2^t\, \log t )$ due to 
	Conlon. The proof was found by GPT-6 Astra, and the method will probably have further applications.
	
	\end{abstract}

	\section{AI Declaration Statement}
The proof of the main result was found by GPT-6 Astra after several guided prompts by the author. Astra first produced a proof that was only 8 pages long and was incomprehensible to the author. As the author tried to understand the proof, many iterations of the write-up were produced, culminating in the current one in which  much of the text and explanations are due to the author. The author takes full responsibility for all the content in the paper.

\section{Introduction}\label{sec:introduction}

For positive integers $m,n,s,t$, let $z(m,n;s,t)$ denote the maximum
number of edges in a bipartite graph with specified classes of sizes
$m,n$ that contains no complete bipartite graph with $s$ vertices in
the first class and $t$ in the second. When $s=t$ we refer to this as the diagonal case. The theorem of
K\H{o}v\'ari, S\'os, and Tur\'an~\cite{KST} gives, in the 
diagonal case,
\begin{equation}\label{eq:KST}
 z(N,N;t,t)\le (t-1)^{1/t}N^{2-1/t}+(t-1)N
 \qquad (t\ge2).
\end{equation}
In a bipartite graph with density at least  $1/2$, the  counting argument showing (\ref{eq:KST}) guarantees a
$K_{t,t}$ when $N$ is of order $t2^t$. Our result removes
the factor of order $t$ in this dense regime. 

\begin{theorem}\label{thm:half}
Let $t,N$ be positive integers. If $G\subseteq K_{N,N}$,
$e(G)\ge N^2/2$, and $N\ge2^{t+81}$, then $G$ contains $K_{t,t}$.
\end{theorem}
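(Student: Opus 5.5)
We prove \cref{thm:half} by a greedy, density-increment style embedding: build the $t$ vertices on one side one at a time, and lose only a constant factor in total rather than a factor $t$. The KST argument loses the factor $t$ because it asks for a $t$-set $S\subseteq A$ with common neighbourhood of size at least $t$, counting $\sum_v\binom{d(v)}{t}\approx N\binom{N/2}{t}$ against $\binom{N}{t}(t-1)$. This ratio costs $2^{-t}$ from the density and an extra $t$-dependent loss from $\binom{N/2}{t}/\binom{N}{t}$ versus $2^{-t}$. That second loss is negligible when $N\gg t^2$. So the real loss is the final requirement that the common neighbourhood contain $t$ vertices, together with needing the $t$ chosen vertices to be jointly dense into it.

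We therefore track a weighted quantity. We choose vertices $a_1,\dots,a_t\in A$ sequentially, keeping a set $B_i=N(a_1)\cap\dots\cap N(a_i)\subseteq B$. The aim is that, on average, $|B_i|\ge 2^{-i}N\cdot\prod_{j\le i}(1+\varepsilon_j)^{-1}$ with $\sum_j\varepsilon_j=O(1)$. With $N\ge 2^{t+81}$ this gives $|B_t|\ge 2^{81-O(1)}\ge t$, which yields $K_{t,t}$ after selecting $t$ vertices of $B_t$.

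The naive step picks $a_{i+1}$ with $|N(a_{i+1})\cap B_i|\ge \tfrac12|B_i|$. This needs the bipartite graph between $A$ (minus the used vertices) and $B_i$ to keep density $\ge 1/2$, and that fails in general. The fix is a weighting and convexity argument. Define $f(S)=|N(S)|$ for $S\subseteq A$, and consider a random $S$ of size $i$. Jensen's inequality applied to $\sum_{b}\binom{d(b)}{i}$ (degrees on the $B$ side) gives
\[
\mathbb{E}\,|N(S)|\ge N\binom{N/2}{i}\Big/\binom{N}{i}\ge N2^{-i}e^{-O(i^2/N)}.
\]
So the average is already fine. The obstacle is that we need $S$ of size exactly $t$ while simultaneously $|N(S)|\ge t$, and averaging only gives $\mathbb{E}|N(S)|\ge 2^{81}\cdot\Omega(1)$, which is much larger than $t$ only when $t\le 2^{81}$. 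For larger $t$ the average is below $t$.

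For large $t$ I would therefore use a two-sided scheme. Pick $S\subseteq A$ of size $s=t/2$ (random, weighted by the common neighbourhood). The expected $|N(S)|$ is about $N2^{-t/2}\ge 2^{t/2+81}$, far more than $t$. Then work inside the bipartite graph between $A\setminus S$ and $T=N(S)$. We need $t/2$ more vertices of $A$ adjacent to $t$ vertices of $T$. The key claim is that for a suitably chosen $S$, the density between $A$ and $N(S)$ is still at least about $1/2$. By convexity, conditioning on $S$ weighted by $|N(S)|$ biases toward higher-degree $B$-vertices, so the density does not drop. Iterating this halving (sizes $t/2, t/4,\dots$) with error factors $1+O(2^{-k})$ at stage $k$ gives a convergent product. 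This is where the constant $2^{81}$ is absorbed.

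The main obstacle is this density-preservation claim: showing that the dependent random choice of $S$ leaves a graph of density at least $\tfrac12(1-\delta_k)$ between the remaining $A$ and $N(S)$, with $\sum\delta_k$ summable. I would try to prove it by comparing $\mathbb{E}\sum_{b\in N(S)}d(b)$ to $\mathbb{E}|N(S)|\cdot N/2$ via the inequality $\sum d^{s+1}/\sum d^{s}\ge \sum d/|B|$, which is a power-mean monotonicity.
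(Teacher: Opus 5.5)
Your scheme has a fatal gap at the last step, and the gap is not the density-preservation claim you flag.

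The iterated halving only ever selects vertices of $A$, in batches of sizes $t/2,t/4,\dots$. So at the end you still need the $t$ chosen vertices of $A$ to have at least $t$ common neighbours in $B$. That is exactly the requirement you correctly diagnosed as the source of the K\H{o}v\'ari--S\'os--Tur\'an factor $t$.

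Splitting $S$ into batches does not change the product of the density factors. Even if the density stayed exactly $1/2$ at every stage, the final common neighbourhood would have size about $N2^{-t/2}2^{-t/4}\cdots\approx N2^{-t}$. With $N=2^{t+81}$ that is about $2^{81}$, which is smaller than $t$ once $t>2^{81}$. Choosing the batches at random, or greedily by conditional expectation, just reproduces the one-shot average $\sum_{b}\binom{d(b)}{t}/\binom Nt$. So no argument of this one-sided type can beat the classical bound. In $G(N,N,1/2)$ a typical $t$-subset of $A$ has only about $N2^{-t}$ common neighbours, and averaging cannot locate the rare exceptional ones.

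For one-sided selection the density drift is harmless anyway: Jensen applied to $\sum_b\binom{d(b)}{t}$ handles it in one step. Likewise your earlier claim $|B_t|\ge 2^{81-O(1)}\ge t$ is false for large $t$, as you noticed.

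The missing idea is to select vertices on \emph{both} sides, one edge $xy$ at a time, and pass to the residual link $G[N(y)\setminus\{x\},\,N(x)\setminus\{y\}]$. Then every later vertex on one side is automatically adjacent to all earlier chosen vertices on the other. Both candidate sets shrink by a factor of about $\rho$ per step. The terminal requirement becomes a single edge rather than $t$ common neighbours: in the ideal model $M_{i+1}=M_i/2-1$ gives $M_t\approx N2^{-t}$, which need only be positive.

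Making this rigorous requires tools your proposal does not have:
\begin{itemize}
\item a potential that tolerates unequal classes and fluctuating density; the paper uses $H(m,n)\rho^{k+20}(1-\rho)^{20}$, with the harmonic mean chosen so that the edge-average of the reciprocal link size is exactly computable;
\item the Sidorenko-type inequality that the full-link densities $q_{xy}$ have geometric mean $\rho e^{\delta}\ge\rho$, which depends on codegrees and not only on degrees;
\item a deletion estimate $H(u,v)q^r\le H(u-1,v-1)(q')^r+\min\{2,5rq^{r-1}\}$, whose losses are summable only while $q$ stays away from $1$;
\item a stability lemma bounding both the signed mean and the second moment of $q-\rho$ linearly in $\delta$, so that the logarithmic gain $(k+19)\delta$ pays for the change in the weight $(1-\rho)^{20}$.
\end{itemize}
Without something playing these roles, either a constant loss per step (as in the warm-up criterion $B_k>2(k-1)$) or a one-sided count brings you back to $N$ of order $t2^t$.
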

Equivalently, Theorem~\ref{thm:half} asserts that
$z(N,N;t,t)<N^2/2$ for $N\ge c 2^{t}$ where $c=2^{81}$, whereas substituting $N=c2^t$ into (\ref{eq:KST}) gives, for fixed $c>0$ as $t\to\infty$,
\[
z(N,N;t,t)\le
\left(
\frac12+\frac{\log t-\log c}{2t}
+O_c\!\left(\frac{(\log t)^2}{t^2}\right)
\right)N^2.
\]This shows explicitly why the K\H{o}v\'ari--S\'os--Tur\'an
estimate alone does not establish the desired conclusion
when $N=c2^t$ with $c$ fixed.  The exact bound (\ref{eq:KST}) requires $c$ to grow on the order of $t$ to be effective in the range $e(G) > N^2/2$. The constant $2^{81}$
has not been optimized. More generally, \Cref{thm:density} shows that a bipartite graph
of density $0<\rho<1$ with $N$ vertices in each class contains
$K_{t,t}$ whenever
\[
N\ge C_\rho\,\rho^{-t},
\qquad C_\rho=2^{41}[\rho(1-\rho)]^{-20}.
\]
Thus, for each fixed density, the sufficient number of vertices
is a constant multiple of $\rho^{-t}$; taking $\rho=1/2$
recovers the bound $N\ge2^{t+81}$.

For $1\le s\le t$, let $b(s,t)$ be the least $N$ such that every
red--blue coloring of $K_{N,N}$ contains a monochromatic $K_{s,t}$,
where the part of size $s$ (and $t$) can lie in either part of the $K_{N, N}$. The bipartite Ramsey
problem was introduced by Beineke and Schwenk~\cite{BeinekeSchwenk};
its connection with Zarankiewicz numbers was developed by
Irving~\cite{Irving}. Thomason~\cite{Thomason} proved
\begin{equation}\label{eq:Thomason}
 b(s,t)\le2^s(t-1)+1.
\end{equation}
In particular, $b(t,t)\le(t-1)2^t+1$. Conlon~\cite{Conlon}
subsequently improved the bound in the diagonal case to
\begin{equation}\label{eq:Conlon}
 b(t,t)\le(2+o(1))2^t\log_2t.
\end{equation}
For lower bounds, independently coloring each edge red or blue with
equal probability gives an expected number of monochromatic copies
of $K_{t,t}$ equal to $2\binom Nt^2 2^{-t^2}$. The first-moment
argument and Stirling's formula therefore give
 $b(t,t)\ge(1/e+o(1))t\,2^{t/2}$.
Hattingh and Henning~\cite{HattinghHenning} used the Lov\'asz Local
Lemma to obtain the stronger bound
 $b(t,t)\ge(\sqrt 2/e+o(1))t\,2^{t/2}$,
which remains the best known asymptotic lower bound; see also
Conlon~\cite{Conlon}. Thus the known lower and upper bounds remain
exponentially apart.
By choosing a color with at least half the edges in a 2-coloring of $E(K_{N,N})$ and applying
\Cref{thm:half} we immediately obtain the following improvement over \eqref{eq:Conlon}.

\begin{corollary}\label{thm:ramsey}
For every positive integer $t$, we have $b(t,t)\le2^{t+81}$.
\end{corollary}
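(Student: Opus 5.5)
The plan is the reduction already indicated before the statement: from a monochromatic-majority color class we obtain a graph to which \Cref{thm:half} applies.

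Let $N = 2^{t+81}$, and fix an arbitrary red--blue coloring of the edges of $K_{N,N}$. The two color classes partition all $N^2$ edges. Hence one of them, say red after possibly renaming the colors, has at least $N^2/2$ edges.

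Let $G \subseteq K_{N,N}$ be the spanning subgraph formed by the red edges, on the same two vertex classes of size $N$. Then $e(G) \ge N^2/2$ and $N \ge 2^{t+81}$, so \Cref{thm:half} gives a copy of $K_{t,t}$ in $G$. All its edges are red, so it is a monochromatic $K_{t,t}$ in the coloring. Since the coloring was arbitrary, every $2$-coloring of $K_{N,N}$ with $N = 2^{t+81}$ contains a monochromatic $K_{t,t}$.

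By the definition of $b(t,t)$ as the least such $N$, this gives $b(t,t) \le 2^{t+81}$. The side convention in the definition of $b(s,t)$ plays no role here, because $s=t$.

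There is no real obstacle; the only points to check are minor. The hypothesis of \Cref{thm:half} is the non-strict inequality $e(G) \ge N^2/2$, which pigeonhole supplies even when $N^2$ is even and the colors split exactly. We also need monotonicity in $N$, which is automatic: \Cref{thm:half} holds for every $N \ge 2^{t+81}$, so nothing further is needed beyond taking $N = 2^{t+81}$ itself. All the difficulty lies in \Cref{thm:half}, which we assume here.
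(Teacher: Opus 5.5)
Your proposal is correct and is exactly the paper's argument: set $N=2^{t+81}$, take a color class with at least $N^2/2$ edges, and apply \Cref{thm:half} to it. Your side remarks are also accurate. Pigeonhole gives the non-strict inequality $e(G)\ge N^2/2$, and exhibiting this single valid $N$ already bounds the least such $N$, so no monotonicity argument is needed.
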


\Cref{thm:half} does not improve  the exponent $2-1/t$ in the classical
Zarankiewicz problem with $t$ fixed and $N$ tending to infinity.
For a broader account of bipartite extremal problems, see
F\"uredi and Simonovits~\cite{FurediSimonovits}.

\Cref{sec:ideas} explains the ideas of the proof and provides extensive motivation for introducing a definition (called the potential function) that is the crucial new ingredient in the proof; we expect that this idea may have further applications. 
The main technical statement is the one-step density theorem,
\Cref{thm:one-step}, stated in \Cref{sec:one-step-statement}.
Sections~\ref{sec:log}--\ref{sec:stability} establish the estimates
needed for its proof, which is given in \Cref{sec:induction}.
We then deduce a general density criterion in \Cref{sec:application}
and prove \Cref{thm:half,thm:ramsey} in \Cref{sec:half-application}.

\section{Proof ideas}\label{sec:ideas}

The proof builds a copy of $K_{t,t}$ by choosing edges one by one,
unless a preliminary density criterion already supplies the remaining
biclique. At each selection step, we restrict our attention to the
bipartite graph induced by the neighborhoods of the chosen endpoints,
after deleting those endpoints, since all future choices must lie in
this residual graph. The main idea is to define a potential function
that measures how long we can continue this process: the potential
starts as a large number and decreases by only a small amount at each
selection step.
We first explain this algorithm with more precise notation  and the requirements on the
potential, then motivate its particular choice (which may not be unique).

\subsection{Choosing vertices on both sides}

The most straightforward approach is to choose all $t$ vertices on one
side and look for $t$ common neighbors on the other. If
$L,R$ are the classes of a bipartite graph with $|L|=|R|=N\ge t$ and
$N_G(S)=\bigcap_{x\in S}N_G(x)$, then double counting gives
\begin{equation}\label{eq:one-shot}
 \frac1{\binom Nt}\sum_{\substack{S\subseteq L\\|S|=t}}|N_G(S)|
 =\frac1{\binom Nt}\sum_{y\in R}\binom{d(y)}t.
\end{equation}
For example, if every vertex in $R$ has degree $N/2$, this average
is
\[
 N\frac{\binom{N/2}t}{\binom Nt}
 =(1+o(1))N2^{-t}
 \qquad\text{when }t^2/N\longrightarrow0.
\]
When $N=c2^t$ with $c$ fixed, the average is approximately $c$.
Hence to be guaranteed $t$ common neighbors we would need a factor of order $t$ in the classical
counting argument.

Instead, select one vertex on each side at a time. Suppose that
$i$ vertices have already been chosen in each class and form a
$K_{i,i}$. Keep unused left and right candidate sets, each vertex
of which is adjacent to every previously selected vertex on the
opposite side. Let $J$ be the graph between these candidate sets.
For an edge $xy\in E(J)$, select $x,y$ and replace the candidate
sets by
\begin{equation}\label{eq:construction-step}
 L'=N_J(y)\setminus\{x\},\qquad
 R'=N_J(x)\setminus\{y\}.
\end{equation}
Write $J'_{xy}=J[L',R']$ for the resulting graph. Every
$K_{k-1,k-1}$ in $J'_{xy}$ extends to a $K_{k,k}$ in $J$ by
adding $x,y$. Thus it suffices either to find the required biclique
directly in the current graph or to choose an edge whose residual
graph contains the smaller biclique. The problem in the second case
is to leave enough useful candidates to continue.

An ideal calculation explains the exponential growth $2^t$ as follows. Suppose both
candidate sets have size $M_i$, their density remains $1/2$, and
the chosen vertices have half the opposite candidate set as
neighbors. Then
\[
 M_{i+1}=M_i/2-1,
 \qquad
 M_i=\frac{N+2}{2^i}-2.
\]
If $N=c2^t$, then $M_t=c-2+2^{1-t}$, so any constant $c>2$
leaves a nonempty candidate set after $t$ ideal steps. In an arbitrary graph,
the candidate sizes and their density vary together. We therefore
attach a nonnegative real number to the current graph and to the
number $k$ of vertices still required on each side. This number
measures the remaining supply of candidates after accounting for
future neighborhood restrictions. We call it a \emph{potential
function}. The proof chooses an edge for which this potential
falls by only a controlled amount; this one-step inequality
replaces the  recurrence for $M_i$.

\subsection{Requirements for a potential function}

Our next goal is to identify the properties that the potential
must satisfy. Before choosing a formula, consider nonnegative
potential functions $\mathcal P_k(J)$, where $J$ is the current
bipartite graph and $k$ is the number of vertices still needed on
each side. We want thresholds $F_k$ such that
\begin{equation}\label{eq:abstract-bound}
 \mathcal P_k(J)>F_k
 \quad\Longrightarrow\quad J\text{ contains }K_{k,k}.
\end{equation}
We call $F_k$ an \emph{inductive potential threshold}.
There are four requirements. First, the potential must be defined
for every graph produced by the construction, including graphs with
unequal or empty classes and all densities. Second, positive potential
at $k=1$ must guarantee an edge. We arrange this by assigning potential
zero to every edgeless graph, including graphs with an empty class.

Third, for $k\ge2$, a graph with potential above its threshold must
allow the algorithm to proceed. We first apply a preliminary
density criterion, proved in \Cref{lem:coarse}, to the current graph
and to its residual graphs. If one of these tests succeeds, it gives
the required biclique directly, using the extension property above.
If none succeeds, we require the one-step implication
\begin{equation}\label{eq:abstract-survival}
 \mathcal P_k(J)>F_k
 \quad\Longrightarrow\quad
 \mathcal P_{k-1}(J'_{xy})>F_{k-1}
 \quad\text{for some }xy\in E(J).
\end{equation}
The numerical hypotheses under which we prove this implication
are stated precisely in \Cref{thm:one-step}.

Finally, let $G$ be a starting graph with $N$ vertices in each class
and density exactly $1/2$; the reduction from density at least $1/2$
is justified in \Cref{sec:half-application}. We require
\[
\mathcal P_t(G)\ge a_0N2^{-t}
\qquad\text{and}\qquad
F_k\le F_*\quad\text{for every }k,
\]
where $a_0>0$ and $F_*$ are absolute constants. If
$N>(F_*/a_0)2^t$, then
\[
\mathcal P_t(G)\ge a_0N2^{-t}>F_*\ge F_t.
\]
At each stage, either the preliminary criterion finishes the
construction or \eqref{eq:abstract-survival} reduces the number of
vertices still needed by two. If the process reaches $k=1$, the
positive potential guarantees an edge. Adding back the previously
chosen endpoints gives a $K_{t,t}$. This proves
\eqref{eq:abstract-bound} by induction once the one-step implication
has been established in the case where the preliminary tests fail.

The uniform bound $F_k\le F_*$ makes the sufficient starting size
a constant multiple of $2^t$. If instead the thresholds grew
proportionally to $k$, the same argument would require a starting
size of order $t2^t$.

We choose nonnegative losses $\ell_k$ with
$\sum_{k\ge2}\ell_k<\infty$, and uniformly bounded thresholds
$F_k$ satisfying $F_k-F_{k-1}\ge\ell_k$.
In the case requiring the one-step estimate, it suffices to prove
the stronger expectation bound
\begin{equation}\label{eq:abstract-loss}
 \E{\mathrm{edge}}\mathcal P_{k-1}(J'_{xy})
 \ge\mathcal P_k(J)-\ell_k
 >F_k-\ell_k
 \ge F_{k-1}.
\end{equation}
Since the average residual potential exceeds $F_{k-1}$, some edge
satisfies \eqref{eq:abstract-survival}. Here
\[
 \E{\mathrm{edge}}W_{xy}
 =\frac1{e(J)}\sum_{xy\in E(J)}W_{xy}
\]
is expectation over a uniformly chosen edge of $J$.
A fixed loss of $2$ per step would instead give
$F_k=F_1+2(k-1)$ and lead only to a bound of order $t2^t$.

\subsection{The form of the potential function}

All logarithms here and throughout the paper are natural unless otherwise specified. Our goal in this section is to explain how we came to  our potential function -- which is  chosen to be a product of a size factor
and a density factor --  and then to derive its dependence on $k$.

Let the current sizes of the vertex parts of the allowable vertices be $m,n$, let the current density be $\rho$,
and suppose that $k$ vertices remain to be chosen on each side.
A potential need not have the form $f(m,n)g_k(\rho)$: it could
depend jointly on $m,n,k,\rho$, or on further features of the graph.
However, the product form appears to be a natural simple choice that works for our proof.

 The function $f(m,n)$
measures the effective supply of vertices, while $g_k(\rho)$
accounts for the density of the graph.
Separating these roles lets us use one size function at every
stage and change the density factor as $k$ decreases. It also
has the following useful property: for positive factors,
\[
 \log\bigl(f(m,n)g_k(\rho)\bigr)
 =\log f(m,n)+\log g_k(\rho).
\]
We can therefore combine a logarithmic size estimate with a
logarithmic density estimate, even though the size and density
of a link depend on the same chosen edge and are not independent. 

We want $f$ to measure in some form the number of available vertices, so we impose
\[
 f(\lambda m,\lambda n)=\lambda f(m,n),\qquad f(N,N)=N.
\]
The first condition says that scaling both classes scales their
effective size by the same factor. Symmetry in $m,n$ and
comparability with $\min\{m,n\}$ are also natural requirements,
since the smaller class limits a balanced biclique.
Ignore the deletion of the selected vertices for the ensuing discussion.
In an ideal step of density $\rho$, both class sizes are multiplied
by $\rho$, the density remains $\rho$, and $k$ decreases to $k-1$.
Before the step, the potential is $f(m,n)g_k(\rho)$.
The step multiplies the size factor by $\rho$, but also
reduces the target from $K_{k,k}$ to $K_{k-1,k-1}$.
To balance these changes and preserve a large potential
for the next stage, we choose
$g_k(\rho)=\rho g_{k-1}(\rho)$.
The potential is then unchanged in this ideal model.
Iterating the recurrence gives
\[
g_k(\rho)=\rho^k g_0(\rho).
\]
This motivates the factor $\rho^k$; the actual proof will
allow small, controlled losses at each step.
 We therefore obtain
\begin{equation}\label{eq:potential-family}
 \mathcal P_k(J)=f(m,n)\rho^k w(\rho),
\end{equation}
where $w$ is independent of $k$. The exponent $k$ records the
$k$ neighborhood restrictions still to come. At starting density
$1/2$ and with equal classes, the potential is
$$\mathcal P_t(G)=N2^{-t} w(1/2).$$ 
The correction $w$ remains to be chosen so
that the actual losses, including deletion and changes in density,
are summable. 

\subsection{Deriving the harmonic mean}

The aim in this section is  to explain the choice of $f$ that allows us to compute various averages over  edges. The symmetry, scaling, and smaller-class
requirements above leave many choices, though there is a particularly convenient one.

Write $e=e(J)=\rho mn>0$. For an edge $xy$, put
$u=d_J(y)$ and $v=d_J(x)$, the sizes of the full neighborhoods
before deleting $x,y$. If $m_+,n_+$ count the nonisolated vertices
in the two original classes, then
\begin{equation}\label{eq:reciprocal-motivation}
 \E{\mathrm{edge}}\left(\frac1u+\frac1v\right)
 =\frac{m_++n_+}{e(J)}.
\end{equation}
Indeed, a vertex of positive degree $d$ is an endpoint of exactly
$d$ edges, and contributes $1/d$ on each one. The degree bias
in sampling an endpoint of a uniform edge therefore cancels
when its degree is inverted.
We will need a lower bound on the average of $\log f(u,v)$.
Since concavity of the logarithm yields
\[
\frac1e\sum_{xy\in E(J)}\log f(u,v)
\ge
-\log\left(\frac1e\sum_{xy\in E(J)}\frac1{f(u,v)}\right),
\]
it suffices to bound the average of $1/f(u,v)$ from above.
The identity \eqref{eq:reciprocal-motivation} makes this
particularly simple if we choose $1/f(m,n)$ to be linear
in $1/m$ and $1/n$.
Within this reciprocal-linear family, symmetry
requires equal coefficients, and $f(N,N)=N$ fixes each coefficient
to be $1/2$. We have thus arrived at the harmonic mean:
\begin{equation}\label{eq:H}
 f(m,n)=H(m,n)=\frac{2mn}{m+n}\qquad(m,n>0).
\end{equation}
If either class is empty, set $f(m,n)=H(m,n)=0$.
It satisfies all the desired size conditions:
\[
 \min\{m,n\}\le H(m,n)\le2\min\{m,n\},\qquad
 H(N,N)=N,\qquad H(\lambda m,\lambda n)=\lambda H(m,n).
\]
In particular, unequal candidate classes can be retained without
discarding vertices merely to balance them.
Write $e=e(J)=\rho mn$. For an edge $xy$, with $x\in L$
and $y\in R$, the full link has vertex classes $N_J(y)$ and
$N_J(x)$, of sizes $d_J(y)$ and $d_J(x)$, respectively.
Since $1/H(s,t)=\frac12(1/s+1/t)$, we have
\[
\frac1e\sum_{xy\in E(J)} \frac1{H(d_J(y),d_J(x))} =\frac1{2e}\sum_{xy\in E(J)} \left(\frac1{d_J(x)}+\frac1{d_J(y)}\right) =\frac{m_++n_+}{2e} \le\frac{m+n}{2\rho mn} =\frac1{\rho H(m,n)}.
\]
Here $m_+,n_+$ count the nonisolated vertices in the two
classes. Each such vertex of degree $d$ contributes $1/d$
on each of its $d$ incident edges, giving a total contribution
of one.
Applying concavity of the logarithm to the positive numbers
$1/H(d_J(y),d_J(x))$ therefore gives
\begin{equation}\label{eq:ideas-size}
		\frac1e\sum_{xy\in E(J)}
		\log H(d_J(y),d_J(x))
		\ge
		-\log\left(
		\frac1e\sum_{xy\in E(J)}
		\frac1{H(d_J(y),d_J(x))}
		\right)
		\ge\log\bigl(\rho H(m,n)\bigr).
\end{equation}
These calculations show why the harmonic mean is perhaps the most natural candidate for $f(n,m)$.

\subsection{Logarithms and linearity of expectation}

Our goal is to combine the size and density estimates for the
 chosen edge and retain the gain needed to pay for
later errors. Given an edge $xy$ write $q=q_{xy}$ for the density of its full link.
The current density $\rho$ and the link density $q$ need not agree.
For example, $C_8$ with four vertices in each class has density
$1/2$; every full link has density $3/4$, whereas deletion of the
chosen endpoints leaves a graph of density zero.

The size $H(u,v)$ and density $q$ both depend on the chosen edge.
Separate lower bounds on their arithmetic means cannot simply
be multiplied to bound the mean of their product. Logarithms
resolve this difficulty via
\[
 \log\bigl(H(u,v)q^r\bigr)=\log H(u,v)+r\log q,
\]
as expectations of the two terms can be added without  any independence
assumption. One important question is therefore whether
$\E{\mathrm{edge}}\log q$ is at least $\log\rho$.

Define the \emph{logarithmic gain}
\begin{equation}\label{eq:ideas-delta}
 \delta=\E{\mathrm{edge}}\log(q/\rho)
       =\log\left(
          \frac{\exp(\E{\mathrm{edge}}\log q)}{\rho}\right).
\end{equation}
The numerator in the rightmost expression is the geometric mean of
the full-link densities. Thus $e^\delta$ is the factor by which
this geometric mean exceeds the original density.
Using (\ref{eq:ideas-delta}), linearity of expectation, and
\eqref{eq:ideas-size}, we obtain, for $r\ge0$,
\begin{equation}\label{eq:ideas-log}
\begin{aligned}
 \E{\mathrm{edge}}\log\bigl(H(u,v)q^r\bigr)
 &=\E{\mathrm{edge}}\log H(u,v)+r\,\E{\mathrm{edge}}\log q\\
 &\ge\log\bigl(\rho H(m,n)\bigr)+r(\log\rho+\delta)
 =\log\bigl(H(m,n)\rho^{r+1}\bigr)+r\delta.
\end{aligned}
\end{equation}

We will prove in \Cref{sec:log} that $\delta\ge0$, even though
individual links can have density below $\rho$. Thus $r\delta$
is a nonnegative gain for every $r\ge0$. Nonnegativity alone
suffices for the preliminary induction, but the refined
argument that we will need in the proof needs quantitative control of the errors.
Hence, the proof will also establish
\eqref{eq:log-gap}, which bounds explicit nonnegative sums
measuring degree and common-neighbor irregularity by $\delta$.
In \Cref{sec:stability}, we use these bounds to prove
\eqref{eq:ideas-stability}: both the signed mean of $q-\rho$
and its second moment are at most $10^4\rho^{-5}\delta$.

These moment bounds make the averaged density error at most
a calculable coefficient times $\delta$. The later argument
checks that this coefficient is at most $r$, allowing the gain
$r\delta$ to absorb the error.

\subsection{The warm-up: recovering the usual bound}

We first show that the uncorrected potential (i.e., without the term $w(\rho)$) recovers the classical bound, and identify the loss responsible for its factor of $t$.

First take $w\equiv1$, and write
\[
 B_k(J)=H(m,n)\rho^k.
\]
If either class is empty, set $B_k(J)=0$ without assigning a density.

The full link contains the selected vertices $x,y$. Before
continuing the algorithm, we must delete both vertices,
together with all edges incident to them, so that neither
vertex can be selected again. The resulting graph $J'_{xy}$
has class sizes $u-1,v-1$. Let $q'$ denote its density when
both classes are nonempty.

We show that this deletion reduces the uncorrected potential
by at most $2$. Suppose first that $u,v\ge2$ and $k\ge2$.
Within the full link, $x$ is adjacent to all $v$ vertices in
the opposite class, and $y$ is adjacent to all $u$ vertices
in its opposite part. Since the edge $xy$ belongs to both
stars, deleting the two vertices removes exactly $u+v-1$
edges. Therefore
\[
uvq=(u+v-1)+(u-1)(v-1)q',
\]
or equivalently,
\[
q=\frac{u+v-1}{uv}\cdot1
+\frac{(u-1)(v-1)}{uv}\,q'.
\]
The two coefficients are nonnegative and sum to one.
Thus $q$ is a convex combination of $1$ and $q'$. Convexity
of $z\mapsto z^{k-1}$ gives
\[
q^{k-1}
\le \frac{u+v-1}{uv}
+\frac{(u-1)(v-1)}{uv}(q')^{k-1}.
\]
Multiplying by $H(u,v)=2uv/(u+v)$, we obtain
\[
\begin{aligned}
 H(u,v)q^{k-1}
 &\le\frac{2(u+v-1)}{u+v}+\frac{2(u-1)(v-1)}{u+v}(q')^{k-1}\\
 &\le2+\frac{2(u-1)(v-1)}{u+v-2}(q')^{k-1}
 =H(u-1,v-1)(q')^{k-1}+2=B_{k-1}(J'_{xy})+2.
\end{aligned}
\]

If $u=1$ or $v=1$, the full link is a star, so $q=1$ and
$H(u,v)<2$. The residual graph has an empty class and hence
potential zero. The same inequality therefore holds in
this case as well. The deletion loss is consequently at
most $2$, independently of the class sizes, the density,
and the number of remaining steps.

Taking $r=k-1$ in \eqref{eq:ideas-log}, and using the
arithmetic--geometric mean inequality with $\delta\ge 0$, gives
$\E{\mathrm{edge}}H(u,v)q^{k-1}\ge \exp(\log B_k(J))=B_k(J)$.
Hence
\begin{equation}\label{eq:ideas-coarse-recurrence}
 \E{\mathrm{edge}}B_{k-1}(J'_{xy})\ge B_k(J)-2.
\end{equation}
Starting with $C_1=0$ and $C_k=C_{k-1}+2$, induction gives
\[
 B_k(J)>C_k=2(k-1)
 \quad\Longrightarrow\quad J\text{ contains }K_{k,k}.
\]
Indeed, $B_1(J)>0$ guarantees an edge. For $k\ge2$,
\eqref{eq:ideas-coarse-recurrence} gives an edge whose residual
graph has $B_{k-1}>2(k-2)$, and the induction hypothesis supplies
a $K_{k-1,k-1}$ that extends to $K_{k,k}$.
At density $1/2$ with equal classes, this criterion guarantees
a $K_{k,k}$ when $N2^{-k}>2(k-1)$.
The factor of order $k$ comes from accumulating a constant loss
of $2$ over $k-1$ steps.

\subsection{Making the loss summable}

We next choose the density correction so that the losses add to a
constant instead of a growing function with the number of steps.
The sharper bound that we will prove, for $r\ge1$, is
\begin{equation}\label{eq:ideas-deletion}
 H(u,v)q^r\le H(u-1,v-1)(q')^r
                  +\min\{2,5r q^{r-1}\}.
\end{equation}
For $q=1/2$ and $r=k-1$, the second error term is
$5(k-1)2^{-(k-2)}$, whose sum over $k\ge2$ is $20$.
More generally, the error is summable when $q$ stays below a
fixed constant less than one.

The obstruction to summing the deletion losses is that the
 density $q$ may approach one, so the bound
$5r q^{r-1}$ need not be small. High density helps in finding
a biclique, but the number of available vertices on each side
also matters. We therefore introduce the weight
\[
w(\rho)=[\rho(1-\rho)]^M
\]
for a fixed positive integer $M$, and use the corrected potential
$B_k(J)w(\rho)$. This weight has the fixed positive value
$w(1/2)=4^{-M}$ and tends to zero as $\rho$ approaches either
endpoint.

The preliminary criterion already gives a $K_{k,k}$ if
$B_k(J)>2(k-1)$. We may therefore concentrate on graphs satisfying
the  upper bound $B_k(J)\le2(k-1)$.
We seek a one-step estimate when the corrected potential
$B_k(J)w(\rho)$ exceeds $F_k$, where $F_k\ge A$ and $A>0$ is
independent of $k$ (we will take $A=2^{40}$). In this case,
\[
A<B_k(J)w(\rho)
\le 2(k-1)w(\rho)
\le 2k\,w(\rho),
\]
and hence
\begin{equation}\label{eq:weight-nontrivial}
 [\rho(1-\rho)]^M>\frac{A}{2k}.
\end{equation}
Although $w(\rho)$ is small, the upper bound
$2(k-1)w(\rho)$ grows with $k$, so these conditions are not
incompatible for all $k$.
Conversely, if \eqref{eq:weight-nontrivial} fails while
$B_k(J)\le2(k-1)$, then
\begin{equation}\label{eq:easyest}
 B_k(J)w(\rho)
 \le 2(k-1)\frac{A}{2k}
 =A\left(1-\frac1k\right)<A\le F_k.
\end{equation}
Thus a graph with potential above its threshold either satisfies
the preliminary criterion or lies in the range
\eqref{eq:weight-nontrivial}. For fixed $k$, this range excludes
densities sufficiently close to $0$ or $1$.
The refined one-step estimate in this range will allow the
thresholds $F_k$ to remain bounded as $k$ grows. Its complete
hypotheses also include numerical bounds on the residual graphs,
as stated in \Cref{thm:one-step}.

Controlling density changes is the main technical issue. In
addition to $\delta\ge0$, we will show
\begin{equation}\label{eq:ideas-stability}
0\le\E{\mathrm{edge}}(q-\rho)\le10^4\rho^{-5}\delta, \qquad \E{\mathrm{edge}}(q-\rho)^2\le10^4\rho^{-5}\delta.
\end{equation}
The upper bound on the average density change is especially
important. 
Estimating the average density change from its second moment by
Cauchy--Schwarz would give only a bound proportional to
$\sqrt{\delta}$. For fixed $k$, this bound exceeds the available
logarithmic gain of order $k\delta$ when $\delta$ is sufficiently
small. We therefore estimate the average directly, retaining
the signs of the individual density changes and exploiting
cancellation to obtain a bound proportional to $\delta$.

The purpose of \eqref{eq:ideas-stability} is to control how
changes in density affect the correction factor in the potential.
A Taylor estimate produces a linear term involving
$\E{\mathrm{edge}}(q-\rho)$ and a quadratic error involving
$\E{\mathrm{edge}}(q-\rho)^2$. The bounds above therefore make
the resulting averaged error proportional to $\delta$, with
a coefficient depending on the density and the remaining
number of steps. We can then compare this coefficient with
that of the available logarithmic gain $r\delta$.

We prove these bounds directly in \Cref{sec:stability}, using
sums of squares that measure deviations of degrees and common
neighborhoods from their expected values. These are connected  
with box norms and quasirandomness
(see \Cref{sec:boxnorm}). The argument requires explicit estimates linear in $\delta$,
which we establish directly in \Cref{sec:stability}.

After a Taylor estimate and a separate bound on very dense links,
the coefficient of the density error is bounded, up to a constant
depending on $M$, by
\[
 \rho^{-5}(1-\rho)^{-2}\log(2k).
\]
In the range \eqref{eq:weight-nontrivial},
\[
 \rho^{-5}(1-\rho)^{-2}
 \le[\rho(1-\rho)]^{-5}
 <\left(\frac{2k}{A}\right)^{5/M}.
\]
For $M>5$, this coefficient grows more slowly than $k$.
Thus a sufficiently large fixed correction exponent and constant
$A$ ensure that the logarithmic gain covers the density error. The explicit proof uses
$M=20$ and $A=2^{40}$; neither numerical choice is intended
to be optimal.

\subsection{The one-step density theorem}\label{sec:one-step-statement}

We now put all of the previously described ingredients together and state the theorem whose proof
occupies the following sections. For a bipartite graph $J$ with
class sizes $m,n>0$ and density $\rho$, define
\begin{equation}\label{eq:potential}
 \calP_k(J)=H(m,n)\rho^{k+20}(1-\rho)^{20}
 =B_k(J)[\rho(1-\rho)]^{20}.
\end{equation}
Set $\calP_k(J)=0$ if either class is empty. In particular,
edgeless graphs and graphs of density one have potential zero.
The three factors have separate roles: $H$ measures the available
vertices, $\rho^k$ accounts for the remaining neighborhood steps,
and $[\rho(1-\rho)]^{20}$ controls the endpoint densities.
For the thresholds, set
\begin{equation}\label{eq:Fdef}
 A=2^{40},\qquad
 F_k=A\exp\!\left(\sum_{j=2}^k\frac1{j^2}\right).
\end{equation}
Thus $F_1=A$, $F_k=F_{k-1}e^{1/k^2}$ for $k\ge2$, and
\begin{equation}\label{eq:F-upper-front}
 F_k<2A=2^{41}.
\end{equation}
For the last inequality, one may use the elementary estimate
\[
 \sum_{j=2}^{\infty}\frac1{j^2}
 \le\frac14+\frac19+\frac1{16}+\int_4^\infty x^{-2}\,dx
 =\frac{97}{144}<\log2.
\]

\begin{theorem}[One-step density theorem]\label{thm:one-step}
Let $k\ge2$ be an integer and let $G$ be a bipartite graph.
For each edge $xy\in E(G)$, let
\[
 G'_{xy}=G[N_G(y)\setminus\{x\},N_G(x)\setminus\{y\}]
\]
be its residual link. Suppose that
\begin{equation}\label{eq:parent-density-hyp}
 B_k(G)\le2(k-1)
\end{equation}
and
\begin{equation}\label{eq:residual-density-hyp}
 B_{k-1}(G'_{xy})\le2(k-2)
 \qquad\text{for every }xy\in E(G).
\end{equation}
If $\calP_k(G)>F_k$, then
\begin{equation}\label{eq:expected-reduction}
 \E{\mathrm{edge}}\calP_{k-1}(G'_{xy})
 \ge\calP_k(G)-\frac{F_{k-1}}{k^2}
 >F_{k-1}.
\end{equation}
In particular, some edge satisfies
$\calP_{k-1}(G'_{xy})>F_{k-1}$.
\end{theorem}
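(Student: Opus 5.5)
We will prove \eqref{eq:expected-reduction} by passing through the full link and then paying separately for deletion of the chosen endpoints. For an edge $xy$ write $u=d_G(y)$, $v=d_G(x)$, let $q=q_{xy}$ be the density of the full link $G[N_G(y),N_G(x)]$, let $q'$ be the density of $G'_{xy}$, and put $r=k-1$. Define $\phi(z)=z^{r+20}(1-z)^{20}$. Then
\[
\calP_k(G)=H(m,n)\,\rho\,\phi(\rho)
\qquad\text{and}\qquad
\calP_{k-1}(G'_{xy})=H(u-1,v-1)\,\phi(q').
\]
The argument has three steps.

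\textbf{Step 1 (deletion).} We prove the analogue of \eqref{eq:ideas-deletion} for $\phi$:
\[
H(u,v)\phi(q)\le H(u-1,v-1)\phi(q')+E_{xy}.
\]
As in the warm-up, $q$ is the convex combination of $1$ and $q'$ with weight $(u+v-1)/(uv)$ on $1$. The deletion moves the density by $q-q'\le (u+v-1)/(uv)\lesssim 2/H(u,v)$. A mean value estimate therefore gives
\[
E_{xy}\lesssim 2\max_{[q',q]}|\phi'|\lesssim (r+40)\,q^{r+19}.
\]
This is the source of the summable term $5rq^{r-1}$, now damped by the weight factor.

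To average this error, we split edges according to whether $q\le 1-\eta$ or $q>1-\eta$. For the dense links we use $\phi(q)\le(1-q)^{20}$ to make the contribution negligible. For the remaining edges we use that $\rho$ is bounded away from $1$ in the range \eqref{eq:weight-nontrivial}. Hypothesis \eqref{eq:residual-density-hyp} gives $H(u-1,v-1)(q')^{r}\le 2(k-2)$, which caps how large $H$ can be on residual links where $q'$ is not small.

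\textbf{Step 2 (log-averaging the full link).} Write
\[
\log\bigl(H(u,v)\phi(q)\bigr)=\log H(u,v)+(r+20)\log q+20\log(1-q).
\]
We average over edges using \eqref{eq:ideas-size} and the definition of $\delta$ in \eqref{eq:ideas-delta}. This gives
\[
\E{\mathrm{edge}}\log\bigl(H(u,v)\phi(q)\bigr)\ge \log\calP_k(G)+(r+20)\delta+20\,\E{\mathrm{edge}}\log\frac{1-q}{1-\rho}.
\]
The last term is the density-error term. We control it by a second-order Taylor expansion of $\log(1-q)$ about $\rho$, valid on links with $q\le 1-\eta$:
\[
\log\frac{1-q}{1-\rho}\ge -\frac{q-\rho}{1-\rho}-C\frac{(q-\rho)^2}{(1-\rho)^2}.
\]
Applying the stability bounds \eqref{eq:ideas-stability} (proved in \Cref{sec:stability}), the total error is at most
\[
C\rho^{-5}(1-\rho)^{-2}\delta,
\]
plus the contribution of the dense links from Step 1. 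In the range \eqref{eq:weight-nontrivial} with $M=20$ we have
\[
\rho^{-5}(1-\rho)^{-2}\le\Bigl(\frac{2k}{A}\Bigr)^{1/4}.
\]
Since $A=2^{40}$, this is at most $r$ for all $k\ge2$, so the gain $(r+20)\delta$ absorbs the error. By the AM--GM inequality (Jensen's inequality for $\exp$) we conclude
\[
\E{\mathrm{edge}}H(u,v)\phi(q)\ge\calP_k(G).
\]

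\textbf{Step 3 (bounding the deletion loss).} It remains to show
\[
\E{\mathrm{edge}}E_{xy}\le \frac{F_{k-1}}{k^2}.
\]
Since $F_{k-1}\ge A=2^{40}$, it suffices that $\E{\mathrm{edge}}E_{xy}\le 2^{40}/k^2$. The factor $q^{r+19}(r+40)$ is tiny unless $q$ is close to $1$. More precisely, unless $q\ge 1-O(\log k/k)$, the error is $O(k^{-2})$ because $q^{r}\le e^{-r(1-q)}$.

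For links with $q$ near $1$, the weight $(1-q)^{20}$ that appears inside $E_{xy}$ (from differentiating $\phi$) gives a factor of roughly $(\log k/k)^{19}$. This should be the case to organize most carefully: the rough bound above forgot the $(1-z)^{20}$ factor, and we must reinstate it in the mean-value estimate, giving
\[
|\phi'(z)|\le (r+40)\,z^{r+19}(1-z)^{19}.
\]
We must also handle links with $u=1$ or $v=1$, where $H<2$ and the residual link has potential $0$. Summing gives a loss of $O(1/k^2)$, far below $2^{40}/k^2$.

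Finally, combining Steps 1–3 gives
\[
\E{\mathrm{edge}}\calP_{k-1}(G'_{xy})\ge\calP_k(G)-\frac{F_{k-1}}{k^2}.
\]
The strict inequality in \eqref{eq:expected-reduction} follows from
\[
\calP_k(G)>F_k=F_{k-1}e^{1/k^2}\ge F_{k-1}\Bigl(1+\frac1{k^2}\Bigr).
\]

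The main obstacle is Step 2: matching the Taylor coefficient of the density error against the gain $r\delta$ using only \eqref{eq:ideas-stability}. It is especially delicate near small $k$, where the gain is weak, and the constant $A$ must cover that regime. The dense-link cutoff must also be chosen consistently between Steps 2 and 3.
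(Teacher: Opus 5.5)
Your overall plan is the paper's: log-average the full link via \eqref{eq:edge-log} to gain $(k+19)\delta$, Taylor-compare $(1-q)^{20}$ with $(1-\rho)^{20}$ using \Cref{lem:stability}, and pay for deletion separately. Step~3 is sound. Keeping the factor $(1-z)^{19}$ gives the uniform bound $E_{xy}\le 2\max\phi+3\max_z (k+19)z^{k+18}(1-z)^{19}=O(k^{-18})$, which is simpler than the paper's deletion estimate.

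The gap is in Step~2, on dense full links. Jensen requires a lower bound on $\E{\mathrm{edge}}\log\bigl(H(u,v)\phi(q)\bigr)$ over \emph{all} edges. But $\log(1-q)\to-\infty$ as $q\to1$, and $q=1$ genuinely occurs: any edge with an endpoint of degree one, or with a complete full link, has it. So this average can be $-\infty$, and your Taylor bound, valid only for $q\le1-\eta$, says nothing there. Pointing to Step~1 does not help, because small contributions of dense links to $H\phi(q)$ are irrelevant to a lower bound. If you discard the dense edges, you must show they did not carry a large share of the edge sum of $\log\bigl(H(u,v)q^{k+19}\bigr)$ that \eqref{eq:edge-log} controls. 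Nothing in your plan bounds $H(u,v)$ on a full link with $q$ near $1$.

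The missing step is exactly where the paper uses \eqref{eq:residual-density-hyp}; you invoke it only for deletion, where it is not needed. For $q>(1+\rho)/2$, the constant deletion bound and the residual hypothesis give $H(u,v)q^{k+19}\le B_{k-1}(G'_{xy})(q')^{20}+2\le 2k$. After placing a shift $\varepsilon=F_{k-1}/k^2$ inside the logarithm, so every term is finite, the per-edge loss is at most $\log(2k/\varepsilon)\le 3\log(2k)\le\frac{12\log(2k)}{(1-\rho)^2}(q-\rho)^2$, since $q-\rho>(1-\rho)/2$. The second-moment bound of \Cref{lem:stability} then charges this loss to $\delta$. Three consequences for your plan follow.
\begin{itemize}
\item The shift means your Step~2 conclusion holds only up to an additive slack, which must be budgeted together with Step~3. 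Alternatively, shift $S_{xy}$ directly as the paper does.
\item Your error coefficient acquires a $\log(2k)$ term.
\item The cutoff must be relative to $\rho$, of the form $q\le(1+\rho)/2$, not $1-O(\log k/k)$. That way $1-s\ge(1-\rho)/2$ between $\rho$ and $q$, and the Taylor coefficient stays $O((1-\rho)^{-2})$.
\end{itemize}

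Finally, absorbing $10^4\rho^{-5}(1-\rho)^{-2}(60+12\log(2k))\,\delta$ into the gain $(k+19)\delta$ is not ``delicate for small $k$.'' Once this constant is restored, your claim that the error is at most $r\delta$ for all $k\ge2$ is no longer what makes it work. What does is that the hypotheses force $A<\calP_k(G)\le 2(k-1)[\rho(1-\rho)]^{20}$, hence $k>A/2$. You should derive this explicitly and use it.
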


The hypotheses involve only the sizes and densities of $G$ and
its residual links; no forbidden-subgraph assumption is imposed.
 If
\eqref{eq:parent-density-hyp} fails, \Cref{lem:coarse} gives
a $K_{k,k}$ in $G$. If \eqref{eq:residual-density-hyp} fails,
the same criterion gives a $K_{k-1,k-1}$ in a residual link,
which extends to a $K_{k,k}$ in $G$.

Sections~\ref{sec:log}--\ref{sec:stability} prove the logarithmic,
deletion, and density-change estimates needed for
\Cref{thm:one-step}. We combine them to prove the theorem in
\Cref{sec:induction}, then give the complete induction and the
deduction of the main results in
Sections~\ref{sec:application} and~\ref{sec:half-application}.

\section{The full-link inequality: Proof of (\ref{eq:ideas-log})}
\label{sec:log}

We begin the proof of \Cref{thm:one-step} by establishing
\eqref{eq:ideas-log}, together with the quantitative bound
\eqref{eq:log-gap}. The latter controls
degree and common-neighbor errors and will be used in
\Cref{sec:stability} to bound changes in the link densities.

The logarithmic averages used here are related to the logarithmic calculus
of Li and Szegedy~\cite{LiSzegedy}, developed in the study of
Sidorenko's conjecture on lower bounds for homomorphism densities of
bipartite graphs~\cite{Sidorenko}. We give a self-contained proof, including
the quantitative bound \eqref{eq:log-gap} needed below.

Let $J$ be a bipartite graph with vertex classes $L,R$, where
\[
|L|=m,\qquad |R|=n,\qquad
e=|E(J)|>0,\qquad \rho=\frac{e}{mn}.
\]
Write $N(x)=N_J(x)$ and $d(x)=|N(x)|$.
For $X\subseteq L$ and $Y\subseteq R$, let $e_J(X,Y)$
denote the number of edges between $X$ and $Y$.
Graphs with no edges, including graphs with an empty class,
have zero potential and do not satisfy the hypothesis
$\calP_k(G)>F_k$ of \Cref{thm:one-step}.
All logarithms are natural.

For an edge $ij$, with $i\in L$ and $j\in R$, its
\emph{full link} is the bipartite graph with classes
$N(j)$ and $N(i)$, containing every edge of $J$ between
these classes. Its density is
\begin{equation}\label{eq:link-density}
	q_{ij}=\frac{e_J(N(j),N(i))}{d(j)d(i)}.
\end{equation}
Since $ij$ is an edge, $i\in N(j)$ and $j\in N(i)$.
Thus the full link still contains the selected vertices
and, in particular, $q_{ij}>0$.
Recall that
\[
H(s,t)=\frac{2st}{s+t}
\qquad(s,t>0).
\]
The two part sizes of the full link of $ij$ are $d(j)$
and $d(i)$, so their harmonic mean is $H(d(j),d(i))$.

\subsection{The logarithmic gain and the error sums}

Define
\begin{equation}\label{eq:delta}
	\delta=\frac1e\sum_{ij\in E(J)}
	\log\frac{q_{ij}}{\rho}.
\end{equation}
The geometric mean of the full-link densities is therefore
$\rho\exp(\delta)$. We will prove that $\delta\ge0$ and,
more precisely, that $\delta$ bounds the sum of three
nonnegative quantities measuring irregularity.
For $i,i'\in L$, define
\begin{equation}\label{eq:common-neighbour-sums}
	C_{ii'}=|N(i)\cap N(i')|,
	\qquad
	W_{ii'}=\sum_{j\in N(i)\cap N(i')}\frac1{d(j)}.
\end{equation}
Thus $C_{ii'}$ counts common neighbors, while $W_{ii'}$
gives a common neighbor of smaller degree a larger weight.
Every sum over $i,i'\in L$ includes all ordered pairs,
including those with $i=i'$.
The row sums of $W$ have a simple graph interpretation:
\begin{equation}\label{eq:weighted-codegree-sums}
	\sum_{i'\in L}W_{ii'}
	=\sum_{j\in N(i)}\frac{|N(j)|}{d(j)}
	=d(i).
\end{equation}
Consequently,
\[
\sum_{i,i'\in L}W_{ii'}=e,
\qquad
\sum_{i,i'\in L}\frac{d(i)d(i')}{e}=e.
\]
These identities will allow several terms to cancel in
the proof.
We use the  function
\[
\psi(s,t)=s\log(s/t)-s+t
\qquad(s\ge0,\ t>0),
\]
with $\psi(0,t)=t$ and $\psi(0,0)=0$.
Its nonnegativity and quadratic lower bound are recorded
in \Cref{lem:remainder}.
Define
\begin{equation}\label{eq:three-sums}
\begin{aligned}
 S_L&=\frac1e\sum_{i\in L}\psi\left(d(i),\frac em\right),
 \qquad S_R=\frac1e\sum_{j\in R}\psi\left(d(j),\frac en\right),\\
 S_C&=\frac1e\sum_{i,i'\in L}\psi\left(W_{ii'},\frac{d(i)d(i')}{e}\right).
\end{aligned}
\end{equation}
If $d(i)d(i')=0$, then $W_{ii'}=0$, so the corresponding
term in $S_C$ is well defined and equals zero.
Each of the three sums is nonnegative.

The quantities $S_L,S_R$ compare the degrees with their
respective average degrees $e/m,e/n$. The quantity $S_C$
compares the weighted common-neighbor sums with
$d(i)d(i')/e$. Controlling these three quantities by $\delta$
is the quantitative information needed later.

\begin{lemma}
	\label{lem:log}
	With the notation above,
	\begin{equation}\label{eq:log-gap}
		\delta\ge S_L+S_R+S_C\ge0.
	\end{equation}
	Moreover, for every real $r\ge0$,
	\begin{equation}\label{eq:edge-log}
		\frac1e\sum_{ij\in E(J)}
		\log\bigl(H(d(j),d(i))q_{ij}^{\,r}\bigr)
		\ge
		\log\bigl(H(m,n)\rho^{r+1}\bigr)+r\delta.
	\end{equation}
\end{lemma}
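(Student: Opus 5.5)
The plan is to prove \eqref{eq:log-gap} first and then derive \eqref{eq:edge-log} from it together with the harmonic-mean computation of \Cref{sec:ideas}. For the second part, nonnegativity of $\delta$ is enough. By \eqref{eq:weighted-codegree-sums}-type counting, each nonisolated vertex of degree $d$ contributes $1/d$ on each of its $d$ edges, so
\[
\frac1e\sum_{ij\in E(J)}\frac1{H(d(j),d(i))}=\frac{m_++n_+}{2e}\le\frac1{\rho H(m,n)}.
\]
Jensen's inequality for the concave logarithm then gives \eqref{eq:ideas-size}. Finally, adding $r$ times the identity $\frac1e\sum\log q_{ij}=\log\rho+\delta$ yields \eqref{eq:edge-log} exactly as in \eqref{eq:ideas-log}; no independence between $H$ and $q$ is needed. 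So all the work is in \eqref{eq:log-gap}.

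First I rewrite the three error sums as relative entropies. Since $\sum_i d(i)=e=\sum_i e/m$, the linear parts $-s+t$ of $\psi$ cancel in $S_L$, and similarly in $S_R$. This gives
\[
eS_L=\sum_i d(i)\log\frac{d(i)m}{e},\qquad
eS_R=\sum_j d(j)\log\frac{d(j)n}{e}.
\]
Using $\sum_{i,i'}W_{ii'}=e=\sum_{i,i'}d(i)d(i')/e$ from \eqref{eq:weighted-codegree-sums}, the same cancellation gives
\[
eS_C=\sum_{i,i'}W_{ii'}\log\frac{eW_{ii'}}{d(i)d(i')}.
\]
Nonnegativity of each sum is then Gibbs' inequality, or alternatively \Cref{lem:remainder}. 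The target becomes an entropy inequality: $e\delta-eS_L-eS_R-eS_C\ge0$.

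Next I expand $e\delta=\sum_{ij}\bigl[\log e(N(j),N(i))-\log d(i)-\log d(j)-\log\rho\bigr]$. The degree terms regroup as
\[
\sum_{ij}\log d(i)=\sum_i d(i)\log d(i),
\]
and similarly on the right. These are exactly the entropy pieces appearing in $eS_L$ and $eS_R$, up to the constants $\log(m/e)$ and $\log(n/e)$, which sum with $-e\log\rho$ to something explicit. For the main term I write
\[
e(N(j),N(i))=\sum_{i'\in N(j)}C_{ii'}
\]
and apply a weighted Jensen (log-sum) inequality to it. The weights are chosen so that, after summing over edges $ij$ and regrouping by ordered pairs $(i,i')$, the multiplicity of the pair $(i,i')$ is exactly $\sum_{j\in N(i)\cap N(i')}1/d(j)=W_{ii'}$. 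Concretely, I choose uniform $i'\in N(j)$. The result is a lower bound of the form $\sum_{i,i'}W_{ii'}\log(\cdot)$, which I then compare with $eS_C$ via Gibbs' inequality against the reference weights $d(i)d(i')/e$. This comparison is the step where the $\log(eW_{ii'}/(d(i)d(i')))$ term is produced.

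The main obstacle is this middle step: choosing the Jensen weights, equivalently a measure on paths $i'\!-\!j\!-\!i\!-\!j'$ of length three, so that the leftover terms telescope exactly into $S_L+S_R+S_C$ rather than leaving a residual of uncontrolled sign. A naive uniform average over $i'\in N(j)$ produces $\log C_{ii'}$ instead of $\log W_{ii'}$. I would therefore first try the reversed order of counting, $e(N(j),N(i))=\sum_{j'\in N(i)}|N(j)\cap N(j')|$, symmetrized over the two sides. Failing that, I would work with the full Donsker--Varadhan variational form
\[
\log\sum_{a}f(a)\ge\sum_a\pi(a)\log\frac{f(a)}{\pi(a)}
\]
with $\pi$ the natural path measure. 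The degree entropies then arise from the endpoint marginals and the $W$-entropy from the $(i,i')$-marginal, in the style of the Li--Szegedy logarithmic calculus~\cite{LiSzegedy}.
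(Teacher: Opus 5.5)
Your argument for \eqref{eq:edge-log} matches the paper's. In fact it needs nothing about $\delta$, since $r\delta$ enters only through the identity $\frac1e\sum\log q_{ij}=\log\rho+\delta$.

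\textbf{The gap is in \eqref{eq:log-gap}, at exactly the step you call the main obstacle.} Your first move is correct and is what the paper does: apply Jensen with $i'$ uniform in $N(j)$ to $q_{ij}=\frac1{d(i)}\cdot\frac1{d(j)}\sum_{i'\in N(j)}C_{ii'}$. This gives
\[
\frac1e\sum_{ij}\log q_{ij}\ge\frac1e\sum_{i,i'}W_{ii'}\log C_{ii'}-h_L,\qquad h_L=\frac1e\sum_i d(i)\log d(i).
\]
Now insert the \emph{exact} identities you already derived for $eS_L,eS_R,eS_C$; note $eS_C=\sum W_{ii'}\log W_{ii'}+e\log e-2\sum_i d(i)\log d(i)$. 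The whole claim then reduces to
\[
\sum_{i,i'}W_{ii'}\log\frac{C_{ii'}}{W_{ii'}}\ \ge\ \sum_{j\in R}d(j)\log d(j).
\]
This inequality is the missing idea, and it needs only one more AM--GM step, not a new measure. For $W_{ii'}>0$, write $C_{ii'}/W_{ii'}=\sum_{j\in N(i)\cap N(i')}\lambda_j\,d(j)$ with $\lambda_j=1/(d(j)W_{ii'})$; these weights sum to one. Hence $W_{ii'}\log(C_{ii'}/W_{ii'})\ge\sum_{j\in N(i)\cap N(i')}\log d(j)/d(j)$. Summing over ordered pairs, each $j$ is counted $d(j)^2$ times, which gives the right side.

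So passing from $\log C$ to $\log W$ is not a defect: it pays out exactly the right-degree entropy. Your remark that the degree terms of $e\delta$ are ``exactly the entropy pieces'' of $eS_L,eS_R$ hides a sign: they enter $e\delta$ with the opposite sign. The main term $\sum_{ij}\log e_J(N(j),N(i))$ must be at least $\sum W_{ii'}\log W_{ii'}+2\sum_j d(j)\log d(j)$. One copy of the right-degree entropy comes from the uniform average and the other from this comparison. Separately, the $-2\sum_i d(i)\log d(i)$ in the $S_C$ identity flips the sign of the left-degree term.

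\textbf{Your proposed alternatives do not replace this step.}
\begin{itemize}
\item Comparing with $eS_C$ ``via Gibbs' inequality against $d(i)d(i')/e$'' is just the statement $S_C\ge0$. Used as an inequality, it bounds $\sum W\log W$ below by $2\sum_i d(i)\log d(i)-e\log e$ and would prove only $\delta\ge S_L+S_R$, discarding $S_C$. $S_C$ must enter through the identity.
\item The reversed count $e_J(N(j),N(i))=\sum_{j'\in N(i)}|N(j)\cap N(j')|$ has the same $\log C$ versus $\log W$ mismatch with the classes swapped. It also leads to a right-class analogue of $S_C$; symmetrizing would give only the average of the two, not the $S_C$ in the statement.
\item The Donsker--Varadhan route can work, but only with a specific measure: $i'$ uniform in $N(j)$, then $j'\in N(i)\cap N(i')$ with probability $1/(d(j')W_{ii'})$, which is the conditional law of $j$ given $i,i'$. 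Its entropy computation is the paper's two AM--GM steps in one. Your proposal neither specifies this measure nor carries out the computation.
\end{itemize}
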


\begin{proof}
	In the logarithmic sums below, terms with coefficient zero
	are interpreted as zero.
		Put
	\[
	h_L=\frac1e\sum_{i\in L}d(i)\log d(i),
	\qquad
	h_R=\frac1e\sum_{j\in R}d(j)\log d(j).
	\]
	Since the degrees in each class sum to $e$, expanding
	the definitions of $S_L,S_R$ gives
	\begin{equation}\label{eq:degree-logs}
		S_L=h_L-\log(e/m),
		\qquad
		S_R=h_R-\log(e/n).
	\end{equation}
	For example, the linear terms in $S_L$ cancel because
	\[
	\sum_{i\in L}\left(-d(i)+\frac em\right)=0.
	\]
		Similarly, the linear terms in $S_C$ cancel, giving
	\begin{equation}\label{eq:weighted-codegree-log}
S_C =\frac1e\sum_{i,i'\in L} W_{ii'}\log\frac{eW_{ii'}}{d(i)d(i')} =\frac1e\sum_{i,i'\in L}W_{ii'}\log W_{ii'} +\log e-2h_L.
\end{equation}
	For the last equality, use
	$\sum_{i'}W_{ii'}=d(i)$ and the symmetry of $W$.
	In particular,
	\[
	\frac1e\sum_{i,i'\in L}W_{ii'}\log d(i)
	=\frac1e\sum_{i\in L}d(i)\log d(i)=h_L,
	\]
	and the term containing $\log d(i')$ has the same value.
		We next compare $C_{ii'}$ and $W_{ii'}$.
	Fix $i,i'$ with $W_{ii'}>0$. The numbers
	\[
	\frac1{d(j)W_{ii'}},
	\qquad j\in N(i)\cap N(i'),
	\]
	are positive and sum to one. Applying the weighted
AM--GM inequality to the degrees
	of these common neighbors gives
	\[
	\log\frac{C_{ii'}}{W_{ii'}}
	\ge
	\frac1{W_{ii'}}
	\sum_{j\in N(i)\cap N(i')}
	\frac{\log d(j)}{d(j)}.
	\]
	Multiplying by $W_{ii'}$, we obtain
	\[
	W_{ii'}\log C_{ii'}
	\ge W_{ii'}\log W_{ii'}
	+\sum_{j\in N(i)\cap N(i')}
	\frac{\log d(j)}{d(j)}.
	\]
	Now sum over $i,i'$ and divide by $e$.
	Each vertex $j\in R$ is a common neighbor of exactly
	$d(j)^2$ ordered pairs in $L$. Therefore
	\begin{equation}\label{eq:common-log}
		\frac1e\sum_{i,i'\in L}W_{ii'}\log C_{ii'}
		\ge
		\frac1e\sum_{i,i'\in L}W_{ii'}\log W_{ii'}+h_R.
	\end{equation}
		For each edge $ij$, count the edges in its full link
	by their endpoint $i'\in N(j)$:
	\[
	e_J(N(j),N(i))=\sum_{i'\in N(j)}C_{ii'}.
	\]
	Hence
	\[
	q_{ij}
	=\frac1{d(i)}
	\left(\frac1{d(j)}
	\sum_{i'\in N(j)}C_{ii'}\right).
	\]
	Every term $C_{ii'}$ in this sum is positive, since $j$
	is a common neighbor of $i$ and $i'$.
	The AM--GM inequality gives
	\[
	\log q_{ij}
	\ge
	\frac1{d(j)}\sum_{i'\in N(j)}\log C_{ii'}
	-\log d(i).
	\]
		Sum over all edges and divide by $e$.
	For fixed $i,i'$, the coefficient of $\log C_{ii'}$ is
	\[
	\frac1e\sum_{j\in N(i)\cap N(i')}\frac1{d(j)}
	=\frac{W_{ii'}}e.
	\]
	Also,
	\[
	\frac1e\sum_{ij\in E(J)}\log d(i)=h_L.
	\]
	It follows from \eqref{eq:common-log},
	\eqref{eq:weighted-codegree-log}, and
	\eqref{eq:degree-logs} that
	\[
\begin{aligned}
 \frac1e\sum_{ij\in E(J)}\log q_{ij}
 &\ge\frac1e\sum_{i,i'\in L}W_{ii'}\log C_{ii'}-h_L\\
 &\ge\frac1e\sum_{i,i'\in L}W_{ii'}\log W_{ii'}+h_R-h_L
 =S_C-\log e+h_L+h_R=\log\rho+S_L+S_R+S_C.
\end{aligned}
\]
	Subtracting $\log\rho$ proves \eqref{eq:log-gap}.
	
	It remains to account for the class sizes of the full links.
	Let $m_+$ and $n_+$ be the numbers of nonisolated vertices
	in $L$ and $R$. Since
	\[
	\frac1{H(d(j),d(i))}
	=\frac12\left(\frac1{d(j)}+\frac1{d(i)}\right),
	\]
	we have
	\[
	\frac1e\sum_{ij\in E(J)}\frac1{H(d(j),d(i))}
	=\frac{m_++n_+}{2e}
	\le\frac{m+n}{2e}
	=\frac1{\rho H(m,n)}.
	\]
		Indeed, each nonisolated vertex $x$ occurs in $d(x)$ edges
	and contributes $1/d(x)$ for each of them.
		Apply concavity of the logarithm to these reciprocals:
		\[
		\frac1e\sum_{ij\in E(J)}\log H(d(j),d(i))
		\ge -\log\left(\frac1e\sum_{ij\in E(J)}
		\frac1{H(d(j),d(i))}\right)
		\ge \log\bigl(\rho H(m,n)\bigr).
		\]
	Finally, \eqref{eq:delta} gives
	\[
	\frac re\sum_{ij\in E(J)}\log q_{ij}
	=r(\log\rho+\delta).
	\]
	Adding this  to the preceding inequality proves
	\eqref{eq:edge-log}.
\end{proof}

\section{Deletion and the preliminary density criterion: Proof of (\ref{eq:ideas-deletion})}\label{sec:deletion}

We prove the deletion estimate \eqref{eq:ideas-deletion}, restated
as \Cref{lem:deletion}, for use in \Cref{thm:one-step}.
We also prove the preliminary density criterion, \Cref{lem:coarse},
which handles the cases where the numerical hypotheses of the
one-step theorem fail.

Fix an edge $xy$, and abbreviate $u=|N(y)|$, $v=|N(x)|$, $q=q_{xy}$.
Let $J'_{xy}$ be the bipartite graph between
$N(y)\setminus\{x\}$ and $N(x)\setminus\{y\}$,
and let $q'$ be its density. If either of these two sets is empty, set
$q'=0$ and interpret every residual potential as zero.
For $k\ge2$, every $K_{k-1,k-1}$ in $J'_{xy}$ extends to a
$K_{k,k}$ in $J$ by adding $x,y$.

\begin{lemma}\label{lem:deletion}
For every real $r\ge1$,
\begin{equation}\label{eq:deletion}
 H(u,v)q^r
 \le H(u-1,v-1)(q')^r+
        \min\{2,\,5r q^{r-1}\}.
\end{equation}
\end{lemma}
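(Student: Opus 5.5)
The plan is to prove the two bounds hidden in the minimum separately: the bound with $2$ and the bound with $5rq^{r-1}$. Since the inequality with a minimum follows from both, this suffices.

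\emph{The degenerate case.} Suppose first that $u=1$ or $v=1$. Then the full link is a star, so $q=1$, and the residual graph has an empty class, so its potential term is zero. Hence the left side of \eqref{eq:deletion} is
\[
H(u,v)=\frac{2uv}{u+v}<2\le 5r=5rq^{r-1},
\]
and both bounds hold. From now on assume $u,v\ge2$.

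\emph{The bound with $2$.} I would reuse the warm-up computation. Deleting $x$ and $y$ removes exactly $u+v-1$ edges, so
\[
q=a+(1-a)q',\qquad a=\frac{u+v-1}{uv},\qquad 1-a=\frac{(u-1)(v-1)}{uv}.
\]
For $r\ge1$ the map $z\mapsto z^r$ is convex, which gives $q^r\le a+(1-a)(q')^r$. Multiplying by $H(u,v)$ yields
\[
H(u,v)q^r\le \frac{2(u+v-1)}{u+v}+\frac{2(u-1)(v-1)}{u+v}(q')^r\le 2+H(u-1,v-1)(q')^r,
\]
where the last step uses $u+v\ge u+v-2$ in the denominator.

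\emph{The bound with $5rq^{r-1}$.} I would use the tangent-line inequality for the convex function $z^r$ at the point $q$:
\[
(q')^r\ge q^r-rq^{r-1}(q-q').
\]
From the convex-combination identity, $q-q'=a(1-q')\le a$. Therefore
\[
H(u-1,v-1)(q')^r\ge H(u-1,v-1)q^r-rq^{r-1}\,H(u-1,v-1)\,a .
\]
It then remains to bound two quantities, and this is the step needing the most care.

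\begin{itemize}
\item The product $H(u-1,v-1)\,a$. It equals
\[
\frac{2(u-1)(v-1)(u+v-1)}{(u+v-2)uv}\le \frac{2(u+v-1)}{u+v-2}\le 3,
\]
since $u+v\ge4$.
\item The difference $H(u,v)-H(u-1,v-1)$. The partial derivatives of $H$ are $2v^2/(u+v)^2$ and $2u^2/(u+v)^2$, which sum to $2(u^2+v^2)/(u+v)^2\le2$. By the mean value theorem along the direction $(1,1)$, $H(u,v)-H(u-1,v-1)\le2$.
\end{itemize}

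Combining these, and using $q^r\le q^{r-1}\le rq^{r-1}$ (valid since $q\le1$ and $r\ge1$), gives
\[
H(u,v)q^r-H(u-1,v-1)(q')^r\le 2q^r+3rq^{r-1}\le 5rq^{r-1}.
\]
Together with the bound with $2$, this yields \eqref{eq:deletion}.
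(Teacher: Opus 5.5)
Your proof is correct and follows the paper's argument essentially step for step. You use the same degenerate case, the same convex-combination bound for the constant $2$, and the same decomposition $(H_0-H_1)q^r+H_1\bigl(q^r-(q')^r\bigr)$ for the refined bound, with only cosmetic differences: you bound $H(u,v)-H(u-1,v-1)\le 2$ by the mean value theorem instead of explicit algebra, and you use $H(u-1,v-1)(q-q')\le 3$ instead of the paper's slightly sharper $3(1-q)$, which the final inequality does not need.
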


\begin{proof}
In the bipartite graph between $N(y)$ and $N(x)$, the vertex $x$ is
adjacent to all $v$ vertices on the other side, and $y$ is adjacent to
all $u$ vertices on its other side. These two stars have exactly
$w=u+v-1$ edges in their union. If $u,v\ge2$, then
\begin{equation}\label{eq:pivotidentity}
 q=\frac w{uv}+\frac{(u-1)(v-1)}{uv}q'.
\end{equation}
In particular, $q'\le q$. Convexity of $z\mapsto z^r$ gives
\[
H(u,v)q^r \le\frac{2w}{u+v} +\frac{2(u-1)(v-1)}{u+v}(q')^r \le2+H(u-1,v-1)(q')^r.
\]
For the refined bound, write $H_0=H(u,v)$ and
$H_1=H(u-1,v-1)$. Direct algebra gives
\[
 H_0-H_1
 =\frac{2(u^2+v^2-u-v)}{(u+v)(u+v-2)}\le2.
\]
Also, \eqref{eq:pivotidentity} implies
\[
 H_1(q-q')
 =\frac{2(1-q)(u+v-1)}{u+v-2}\le3(1-q),
\]
where the last inequality uses $u+v\ge4$.
Since $q^r-(q')^r\le r q^{r-1}(q-q')$, we have
\[
H_0q^r-H_1(q')^r =(H_0-H_1)q^r+H_1\bigl(q^r-(q')^r\bigr) \le2q^r+3r(1-q)q^{r-1} \le5r q^{r-1}.
\]
Combining the two bounds proves \eqref{eq:deletion} when $u,v\ge2$.

If $u=1$ or $v=1$, the full link consists of one of the two stars, so
$q=1$, $H(u,v)<2$, and the residual potential is zero. Since
$\min\{2,5r\}=2$, the same conclusion holds.
\end{proof}

We now give the formal proof of the preliminary density criterion
from the warm-up argument.

\begin{lemma}[Preliminary density criterion]\label{lem:coarse}
For every positive integer $k$ and every bipartite graph $J$,
\begin{equation}\label{eq:coarse}
 B_k(J)>2(k-1)
 \quad\Longrightarrow\quad J\text{ contains }K_{k,k}.
\end{equation}
\end{lemma}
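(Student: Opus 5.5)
We argue by induction on $k$, following the warm-up computation but now writing it out carefully. Throughout, $B_k(J)=H(m,n)\rho^k$, and $B_k(J)=0$ whenever a class is empty. For $k=1$ the hypothesis $B_1(J)>0$ forces both classes to be nonempty and $\rho>0$. Hence $J$ has an edge, which is a $K_{1,1}$.

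For $k\ge2$, suppose $B_k(J)>2(k-1)>0$. Then $e(J)>0$, so we may apply \Cref{lem:log}.

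\begin{enumerate}
\item Take $r=k-1\ge1$ in \eqref{eq:edge-log}. Since $\delta\ge0$, this gives
\[
\frac1e\sum_{ij\in E(J)}\log\bigl(H(d(j),d(i))q_{ij}^{k-1}\bigr)\ge\log B_k(J).
\]
All the quantities inside the logarithms are positive, because $q_{ij}>0$ and the degrees are positive.

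\item By the AM--GM inequality (concavity of $\log$), the arithmetic mean over edges of $H(d(j),d(i))q_{ij}^{k-1}$ is at least its geometric mean. Hence this arithmetic mean is at least $B_k(J)$.

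\item Apply \Cref{lem:deletion} with $r=k-1$ to each edge $xy$, keeping only the bound $\min\{2,\cdot\}\le2$. This gives
\[
H(u,v)q^{k-1}\le B_{k-1}(J'_{xy})+2,
\]
with the convention that $B_{k-1}(J'_{xy})=0$ when a class of $J'_{xy}$ is empty. In that degenerate case $H(u-1,v-1)=0$ as well, so the convention agrees with the lemma.

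\item Averaging over edges yields
\[
\E{\mathrm{edge}}B_{k-1}(J'_{xy})\ge B_k(J)-2>2(k-2).
\]
So some edge $xy$ satisfies $B_{k-1}(J'_{xy})>2(k-2)$.

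\item By the induction hypothesis, $J'_{xy}$ contains a $K_{k-1,k-1}$. Its left vertices lie in $N(y)\setminus\{x\}$ and its right vertices lie in $N(x)\setminus\{y\}$. Adding $x$ on the left and $y$ on the right therefore produces a $K_{k,k}$ in $J$, because $xy$ is an edge and every added pair is adjacent.
\end{enumerate}

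The only delicate point is step 3 when $u=1$ or $v=1$, and the matching conventions for empty classes. \Cref{lem:deletion} already covers this case: the full link is a star, $q=1$, $H(u,v)<2$, and the residual potential is zero. I would also check in step 1 that every full-link density is positive, so that taking logarithms is legitimate. Everything else is a direct combination of the earlier lemmas.
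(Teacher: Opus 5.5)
Your proposal is correct and follows the paper's proof essentially step for step. Both argue by induction on $k$, apply \eqref{eq:edge-log} with $r=k-1$ and $\delta\ge0$, use concavity of the logarithm to pass to the arithmetic mean, take the constant bound $2$ from \Cref{lem:deletion}, average over edges, and extend the residual $K_{k-1,k-1}$ by $x,y$; your checks on the positivity of the $q_{ij}$ and on the empty-class conventions fill in details the paper leaves implicit.
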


\begin{proof}
We induct on $k$. If $k=1$, then $B_1(J)>0$ implies that $J$
has an edge. Now let $k\ge2$ and assume $B_k(J)>2(k-1)$.
The graph has nonempty classes and positive density. For a
uniformly chosen edge, let $u,v,q$ be the full-link sizes and
density. Taking $r=k-1$ in \eqref{eq:edge-log}, using
$\delta\ge0$ from \Cref{lem:log}, and applying concavity of
the logarithm, we obtain
\[
 \E{\mathrm{edge}}H(u,v)q^{k-1}\ge B_k(J).
\]
By \Cref{lem:deletion},
\[
 \E{\mathrm{edge}}B_{k-1}(J'_{xy})
 \ge B_k(J)-2>2(k-2).
\]
Some edge therefore has $B_{k-1}(J'_{xy})>2(k-2)$.
The induction hypothesis gives a $K_{k-1,k-1}$ in that residual
graph, and adding the selected endpoints gives a $K_{k,k}$ in $J$.
\end{proof}

\section{Controlling the link densities: Proof of (\ref{eq:ideas-stability})}
\label{sec:stability}

The purpose of this section is to prove \eqref{eq:ideas-stability},
restated below as \Cref{lem:stability}. These estimates control the
density errors in the proof of \Cref{thm:one-step}.
Let $J$ have vertex classes $L,R$ of sizes $m,n$, with
$e=|E(J)|>0$ and $\rho=e/(mn)$. Write $d(x)=|N_J(x)|$.
We retain the full-link densities $q_{ij}$, the logarithmic gain
$\delta$, and the quantities $S_L,S_R,S_C$ from \Cref{sec:log}.
In sums without specified ranges, $i,i'$ run over $L$ and $j,j'$
over $R$. Indices from the same class need not be distinct.

\begin{lemma}\label{lem:stability}
	For every bipartite graph with $\rho>0$,
	\begin{equation}\label{eq:stability}
0\le\frac1e\sum_{ij\in E(J)}(q_{ij}-\rho) \le10^4\rho^{-5}\delta, \qquad \frac1e\sum_{ij\in E(J)}(q_{ij}-\rho)^2 \le10^4\rho^{-5}\delta.
\end{equation}
\end{lemma}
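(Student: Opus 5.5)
We will prove \Cref{lem:stability} by writing the average link density as a polynomial expression in the degree and codegree data, and comparing that expression with the value it would take in a perfectly regular graph. The deviations from the regular values are exactly what $S_L$, $S_R$ and $S_C$ measure, and \eqref{eq:log-gap} bounds them by $\delta$.

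\emph{Step 1: expand $q_{ij}$.} From the proof of \Cref{lem:log} we have
\[
q_{ij}=\frac{1}{d(i)d(j)}\sum_{i'\in N(j)}C_{ii'}.
\]
Hence
\[
\frac1e\sum_{ij\in E(J)} q_{ij}=\frac1e\sum_{i,i'} \frac{C_{ii'}}{d(i)}\,W_{ii'},
\]
since the coefficient attached to the pair $(i,i')$ collects $1/d(j)$ over the common neighbours $j$. In the regular model we have $d(i)=e/m$, $d(j)=e/n$, $C_{ii'}=\rho^2 n$ and $W_{ii'}=d(i)d(i')/e$, and the sum equals $\rho$ exactly. We will write
\[
C_{ii'}=\rho^2n+\gamma_{ii'},\qquad W_{ii'}=\frac{d(i)d(i')}{e}+\omega_{ii'},\qquad d(i)=\frac em+\alpha_i,\qquad d(j)=\frac en+\beta_j.
\]
Expanding, the first-order terms in $\omega$ vanish after summation by \eqref{eq:weighted-codegree-sums}. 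The first-order terms in $\alpha$ vanish because $\sum_i\alpha_i=0$. What remains are cross terms such as $\sum \gamma\omega/d(i)$ and terms of the form $\sum\gamma_{ii'}d(i')/e$.

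\emph{Step 2: the main obstacle.} The term linear in $\gamma$ is not obviously of second order. The codegree deviation $\gamma$ must be tied to $\omega$ and $\beta$: we have $C_{ii'}-W_{ii'}\,e/n=\sum_{j\in N(i)\cap N(i')}(1-\frac{e}{n\,d(j)})$, which is linear in $\beta$ weighted by the common neighbourhood. Summed against $d(i')$, the quantity $\sum_{i'}C_{ii'}d(i')$ is the number of paths $i-j-i'-j'$. We will handle it by rewriting it as $\sum_j d(j)\cdot(\text{sum of }d(i')\text{ over }N(j))$, so that its deviation is a product of two deviations and is therefore quadratic. This is where the sign-retaining cancellation, rather than Cauchy--Schwarz, is needed, and it is where I expect the work to be. I will first try to express the linear part as an exact identity and show that it equals a sum of products of pairs of deviations. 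This is plausible because the expected density $\rho$ is a stationary point of the codegree count (Sidorenko-type).

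\emph{Step 3: bound the quadratic terms.} The quadratic terms are bounded using the lower bound in \Cref{lem:remainder}, namely $\psi(s,t)\ge c(s-t)^2/\max(s,t)$. This gives
\[
\sum_i\alpha_i^2/d\lesssim eS_L,\qquad \sum_j\beta_j^2\lesssim eS_R\cdot(e/n),\qquad \sum\omega^2/(dd'/e)\lesssim eS_C.
\]
The cross terms are then controlled by Cauchy--Schwarz between two such squared sums, which yields bounds linear in $\delta$. The factors $\rho^{-5}$ come from dividing by degrees normalized as $\rho m$, $\rho n$ and from the crude bounds $q_{ij}\le1$ and $C\le\min(d(i),d(i'))$.

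\emph{Step 4: lower bound and second moment.} The lower bound $\frac1e\sum (q_{ij}-\rho)\ge0$ follows from the same expansion: it is a sum of squares. Alternatively, note that $\sum q_{ij}/e$ counts $C_4$-homomorphism-type paths weighted by degrees, and apply Jensen. For the second moment, use
\[
(q-\rho)^2\le 2\rho\,\psi(q,\rho)\cdot\frac{\max(q,\rho)}{\rho}\le 2\psi(q,\rho)/\rho,
\]
valid since $q\le1$. Then use
\[
\frac1e\sum\psi(q_{ij},\rho)=\delta\cdot(\cdot)-\rho+\frac1e\sum q_{ij},
\]
or more precisely the identity $\psi(q,\rho)=q\log(q/\rho)-q+\rho$. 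Its average is at most $\max q\cdot$ (average of $\log(q/\rho)$ with positive parts) plus the first-moment error. To handle the mismatch between weighting by $q$ and uniform weighting, split into $q\le\rho$, where the logarithm is negative but bounded below by quadratic terms, and $q>\rho$. Combined with the first-moment bound, this gives a bound of order $\rho^{-5}\delta$.
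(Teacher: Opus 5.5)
Your proposal has a genuine gap at its central step. Step~2 is where the whole content of the lemma lies, and it is left as an intention (``I will first try\dots''). The signed mean must be bounded \emph{linearly} in $\delta$, and the proposal never does this. Two concrete defects also block the plan as written. First, you expand in the deviations $\alpha_i,\beta_j$ with $d(i)$ (and, inside $W_{ii'}$, $d(j)$) in denominators. But individual degrees can be $1$, and nothing in your plan controls the edges at vertices of degree far below average. The paper handles this with the exceptional set $\mathcal B$ of edges having an endpoint of less than half the average degree. It shows $|\mathcal B|\le4mn\delta$ from $mD_L+nD_R\le2mne\delta$, and only off $\mathcal B$ does it replace $1/(d(i)d(j))$ by $mn/e^2$. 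Second, your Step~3 bounds do not follow from \Cref{lem:remainder}. The inequality $\psi(s,t)\ge(s-t)^2/(2\max\{s,t\})$ controls $(s-t)^2/\max\{s,t\}$, not $(s-t)^2/t$, and the difference is unbounded. For an isolated edge $i_0j_0$ one has $W_{i_0i_0}=1$ and $d(i_0)^2/e=1/e$. So the pair $(i_0,i_0)$ contributes about $e$ to $\sum_{i,i'}\omega_{ii'}^2/(d(i)d(i')/e)$, but only $\psi(1,1/e)\approx\log e$ to $eS_C$. What the cross terms really need are bounds of order $m^2n^2\delta$ on $\sum_{i,i'}(C_{ii'}-d(i)d(i')/n)^2$ and on $(e/n)^2\sum_{i,i'}\omega_{ii'}^2=\sum_{i,i'}(\widehat C_{ii'}-d(i)d(i')/n)^2$. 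These are \eqref{eq:common-square} and \eqref{eq:weighted-common-square}, whose proof compares $C_{ii'}$ with $\widehat C_{ii'}=(e/n)W_{ii'}$ and treats pairs with $\widehat C_{ii'}>2n$ separately.

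With those inputs your framework does close, by a route different from the paper's $T_{ij}$/$R_4$ identity \eqref{eq:AT}. Since $\sum_i\omega_{ii'}=0$ by \eqref{eq:weighted-codegree-sums} and symmetry, you may center $C_{ii'}$ at $d(i)d(i')/n$. This gives the exact identity (with $i$ over non-isolated vertices)
\[
\frac1e\sum_{ij\in E(J)}q_{ij}
=\rho+\frac1{e^2}\Bigl(\Gamma+\frac enD_L+\frac emD_R\Bigr)
+\frac1e\sum_{i,i'}\frac{\bigl(C_{ii'}-d(i)d(i')/n\bigr)\,\omega_{ii'}}{d(i)} ,
\]
where the first correction is exactly the path count $e^{-2}\sum_{ij\in E(J)}d(i)d(j)-\rho$ you had in mind, with $\Gamma$ as in \eqref{eq:gamma}. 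For the last sum, apply Cauchy--Schwarz with \eqref{eq:common-square} and \eqref{eq:weighted-common-square} over $i$ with $d(i)\ge e/(2m)$. For the remaining $i$, use $|C_{ii'}-d(i)d(i')/n|\le d(i)$ and $\sum_{i'}|\omega_{ii'}|\le2d(i)$. Together these give a bound of order $\rho^{-2}\delta$.

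Step~4 also needs repair. You do not show the lower bound is a sum of squares, and it is not manifestly one, since $\Gamma$ has no sign. It is, however, immediate from \Cref{lem:AMGM} and $\delta\ge0$: $\frac1e\sum q_{ij}\ge\rho\exp(\delta)\ge\rho$. For the second moment, $\psi(q,\rho)$ is the wrong divergence. One has $\frac1e\sum\psi(q_{ij},\rho)=\rho\delta-\frac1e\sum(q_{ij}-\rho)+\frac1e\sum(q_{ij}-\rho)\log(q_{ij}/\rho)$, and the last sum is of the same order as the quantity you want, so the argument is circular. Reverse the arguments instead: $\frac1e\sum\psi(\rho,q_{ij})=\frac1e\sum(q_{ij}-\rho)-\rho\delta$, and $\psi(\rho,q)\ge(q-\rho)^2/2$ by \Cref{lem:remainder} with $M=1$. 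Hence the second moment is at most twice the signed mean, which is simpler than the paper's argument through $\sum T_{ij}^2$ and $\mathcal B$.
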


\subsection{Degrees, common neighbors, and four-cycles}

The average degrees on $L$ and $R$ are $e/m$ and $e/n$.
Define their sums of squared deviations by
\begin{equation}\label{eq:centered}
	D_L=\sum_i\left(d(i)-\frac em\right)^2,
	\qquad
	D_R=\sum_j\left(d(j)-\frac en\right)^2.
\end{equation}

\begin{lemma}\label{lem:degree-errors}
	We have
	\begin{equation}\label{eq:degree-errors}
		D_L\le2neS_L,\qquad D_R\le2meS_R,\qquad
		mD_L+nD_R\le2mne\delta.
	\end{equation}
\end{lemma}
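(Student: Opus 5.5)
We use the quadratic lower bound for $\psi$ recorded in \Cref{lem:remainder}. We apply it in the form $\psi(s,t)\ge (s-t)^2/(2\max\{s,t\})$; for $s\le t$ the bound $(s-t)^2/(2t)$ also holds. For the first inequality, write $\bar d=e/m$. Then
\[
eS_L=\sum_i\psi(d(i),\bar d)\ge\sum_i\frac{(d(i)-\bar d)^2}{2\max\{d(i),\bar d\}}\ge\frac{D_L}{2n},
\]
because every degree $d(i)$ is at most $n$, and $\bar d\le n$ as well. This gives $D_L\le 2neS_L$. The same argument with $d(j)\le m$ and $e/n\le m$ gives $D_R\le 2meS_R$.

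For the combined bound, we multiply the first inequality by $m$ and the second by $n$, then add. This yields
\[
mD_L+nD_R\le 2mne(S_L+S_R).
\]
By \eqref{eq:log-gap}, $S_L+S_R\le S_L+S_R+S_C\le\delta$, because $S_C\ge0$. This proves the third inequality.

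The only step needing care is which quadratic lower bound \Cref{lem:remainder} actually provides. If it only gives $\psi(s,t)\ge (s-t)^2/(2\max\{s,t\})$, the argument above works directly. If instead it is stated with denominator $2t$ only for $s\le t$, we still need the case $s>t$. There we use the elementary fact that $\psi(s,t)=\int_t^s\log(x/t)\,dx$, and the integrand satisfies $\log(x/t)\ge (x-t)/x\ge (x-t)/s$ on $[t,s]$. Integrating gives $\psi(s,t)\ge (s-t)^2/(2s)$. Since $s\le n$ (respectively $s\le m$), this is at least $(s-t)^2/(2n)$ (respectively $(s-t)^2/(2m)$), which is all the argument above needs.
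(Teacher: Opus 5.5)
Your proof is correct and is essentially the paper's argument: apply the quadratic bound \eqref{eq:psi-quadratic} with $M=n$ (respectively $M=m$), using $d(i)\le n$ and $0<e/m\le n$, then sum, and for the third inequality multiply by $m,n$, add, and use $S_L+S_R\le\delta$ from \eqref{eq:log-gap}. Your fallback integral argument via $\psi(s,t)=\int_t^s\log(x/t)\,dx$ is valid. It is not needed, since \Cref{lem:remainder} already states the bound with a common upper bound $M$ for both $s$ and $t$.
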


\begin{proof}
	Since $d(i)\le n$ and $0<e/m\le n$, apply
	\eqref{eq:psi-quadratic} with $s=d(i)$, $t=e/m$, and $M=n$.
	Summing the resulting inequalities gives
	\[
	D_L\le2n\sum_i\psi\left(d(i),\frac em\right)=2neS_L.
	\]
	The right-side estimate follows with $M=m$.
	Multiplying these bounds by $m,n$, respectively, and using
	$S_L+S_R\le\delta$ proves the last inequality.
\end{proof}

We next control the common-neighbor counts
$C_{ii'}=|N(i)\cap N(i')|$ from
\eqref{eq:common-neighbour-sums}. For each ordered pair $i,i'$,
choosing an ordered pair of common neighbors $j,j'$ gives the
closed walk $i,j,i',j',i$. The same walk can be specified by an
edge $ij$ and an edge $i'j'$ of its full link. Counting these
ordered choices in the two ways gives
\begin{equation}\label{eq:four-cycle-count}
	\sum_{i,i'}C_{ii'}^2
	=\sum_{ij\in E(J)}e_J(N(j),N(i)).
\end{equation}
Thus four-cycle counts arise naturally when we sum the edge
counts of full links. Here and below these are labeled
four-cycle counts allowing repeated vertices. 

Regularity alone does not force this count to be close to
$\rho^4m^2n^2$. For example, two
disjoint copies of $K_{N/2,N/2}$ form a bipartite graph with
$N$ vertices in each class, density $1/2$, and every degree
equal to $N/2$. Nevertheless, the sum in
\eqref{eq:four-cycle-count} is $N^4/8$, twice the benchmark
$\rho^4N^4=N^4/16$. We therefore need the common-neighbor
information in $S_C$ as well as the degree information in
$S_L,S_R$.

Let $A_{ij}$ be the adjacency indicator. Define
\begin{equation}\label{eq:B-R}
B_{ii'}=C_{ii'}-\rho d(i)-\rho d(i')+\rho^2n =\sum_j(A_{ij}-\rho)(A_{i'j}-\rho), \qquad R_4=\sum_{i,i'}B_{ii'}^2.
\end{equation}
The subtraction in $B_{ii'}$ centers the adjacency entries;
expanding the square shows that
\[
R_4=\sum_{i,i',j,j'}
(A_{ij}-\rho)(A_{i'j}-\rho)(A_{i'j'}-\rho)(A_{ij'}-\rho).
\]
If every left degree is $\rho n$ and every right degree is
$\rho m$, then $B_{ii'}=C_{ii'}-\rho^2n$ and
$\sum_{i,i'}C_{ii'}=\sum_jd(j)^2=\rho^2m^2n$. Consequently,
\[
R_4=\sum_{i,i'}C_{ii'}^2-\rho^4m^2n^2.
\]
In this special case $R_4$ is exactly the excess four-cycle
count. The following bound also accounts for unequal degrees.

\begin{lemma}\label{lem:fourth}
	We have
	\begin{equation}\label{eq:Rbound}
		R_4\le16e^2S_C+20nD_R+\frac{2D_L^2}{n^2}
		\le60m^2n^2\delta.
	\end{equation}
\end{lemma}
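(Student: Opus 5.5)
The plan is to decompose the centered codegree $B_{ii'}$ into three pieces, each controlled by one of $S_C$, $D_R$, $D_L$. Then I expand $R_4=\sum B_{ii'}^2$ using $(a+b+c)^2\le4a^2+4b^2+2c^2$, and finally convert everything to $\delta$ via \Cref{lem:degree-errors} and \eqref{eq:log-gap}. Writing $t_{ii'}=d(i)d(i')/e$, I would use the exact identity
\[
 B_{ii'}=\frac en\bigl(W_{ii'}-t_{ii'}\bigr)
 +\Bigl(C_{ii'}-\frac en W_{ii'}\Bigr)
 +\frac1n\Bigl(d(i)-\frac em\Bigr)\Bigl(d(i')-\frac em\Bigr).
\]
To verify it, note that $\frac en t_{ii'}=d(i)d(i')/n$. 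Also $\rho d(i)+\rho d(i')-\rho^2n$ equals $d(i)d(i')/n$ minus the last product, since $\rho n=e/m$.

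\textbf{Third term.} Summing its square over ordered pairs factors exactly:
\[
\sum_{i,i'}\frac1{n^2}\Bigl(d(i)-\frac em\Bigr)^2\Bigl(d(i')-\frac em\Bigr)^2=\frac{D_L^2}{n^2}.
\]
With coefficient $2$ this gives the term $2D_L^2/n^2$.

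\textbf{Middle term.} We have
\[
C_{ii'}-\frac en W_{ii'}=\sum_{j\in N(i)\cap N(i')}\frac{d(j)-e/n}{d(j)}.
\]
By Cauchy--Schwarz its square is at most
\[
C_{ii'}\sum_{j\in N(i)\cap N(i')}\frac{(d(j)-e/n)^2}{d(j)^2}.
\]
Using $C_{ii'}\le n$ and the fact that each $j$ is a common neighbor of exactly $d(j)^2$ ordered pairs, the total is at most $nD_R$. This contributes $4nD_R\le20nD_R$, leaving slack if a cruder split is needed.

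\textbf{First term.} This is the delicate step, because I must turn $\sum (W_{ii'}-t_{ii'})^2$ into $e\,S_C$ with the right power of $e$. I would apply the quadratic lower bound \eqref{eq:psi-quadratic} on $\psi$ with a bound $M$ on both $W_{ii'}$ and $t_{ii'}$. The goal is $(e/n)^2\cdot 2M\cdot eS_C\le 4e^2S_C$ per unit coefficient, so I need $M\lesssim n^2/e$-type control. If a uniform $M$ is too weak, I would first try the weighted form $(s-t)^2\le 2\max\{s,t\}\psi(s,t)$. I would bound $\frac en W_{ii'}$ by something like $C_{ii'}$ plus the middle error, and $\frac en t_{ii'}$ by $d(i)d(i')/n\le n$, possibly moving part of the cost into the $D_R$ term (hence the generous constant $20$). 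This matching of constants to reach $16e^2S_C$ is the main obstacle I expect.

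\textbf{Conversion to $\delta$.} Use $e\le mn$ to get $16e^2S_C\le16m^2n^2S_C$. By \Cref{lem:degree-errors}, $20nD_R\le40mn\,e\,S_R\le40m^2n^2S_R$. Since $D_L\le mn^2$ trivially, $2D_L^2/n^2\le 2mD_L\le4mne\,S_L\le4m^2n^2S_L$. Summing and using $S_L+S_R+S_C\le\delta$ from \eqref{eq:log-gap} gives $R_4\le(16+40+4)m^2n^2\delta=60m^2n^2\delta$.
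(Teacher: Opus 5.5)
\textbf{There is a genuine gap: the first term, which carries the whole lemma, is never actually bounded.} Everything around that step is correct. This includes your three-term identity for $B_{ii'}$, the grouping $(a+b+c)^2\le4a^2+4b^2+2c^2$, and the exact value $D_L^2/n^2$ of the third term. Your bound $\sum_{i,i'}K_{ii'}^2\le nD_R$ for $K_{ii'}=C_{ii'}-\frac enW_{ii'}$ is also correct; per-pair Cauchy--Schwarz plus the $d(j)^2$ count is a slightly simpler route than the paper's expansion over pairs $j,j'$. The conversion to $60m^2n^2\delta$ is fine too.

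What you still need, writing $\widehat C_{ii'}=\frac enW_{ii'}$, is
\[
\sum_{i,i'}\Bigl(\widehat C_{ii'}-\frac{d(i)d(i')}{n}\Bigr)^2\le4e^2S_C+4nD_R .
\]
Neither tool you suggest gives this, because $\widehat C_{ii'}$ need not be $O(n)$. Take a vertex $i_0$ whose $n/2$ neighbours all have degree one, and join the other $m-1$ left vertices to all of the remaining $n/2$ right vertices. Then $e=mn/2$, $\widehat C_{i_0i_0}=mn/4$, and $d(i_0)^2/n=n/4$. So no uniform $M=O(n)$ works in \eqref{eq:psi-quadratic}. The weighted form $(s-t)^2\le2\max\{s,t\}\psi(s,t)$ charges this single pair about $\tfrac18m^2n^2\log m$, whereas here $e^2S_C=O(mn^2\log m)$ and $nD_R=O(m^2n^2)$, so no absolute constants suffice. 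Bounding the maximum by $n+|K_{ii'}|$ leaves a cross term $|K_{ii'}|\,\psi$ that neither $K_{ii'}^2$ nor $S_C$ controls.

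\textbf{The missing idea is a case split on $\widehat C_{ii'}$.} On pairs with $\widehat C_{ii'}\le2n$, both arguments of $\psi\bigl(\widehat C_{ii'},d(i)d(i')/n\bigr)$ are at most $2n$, since $d(i)d(i')/n\le n$. Applying \eqref{eq:psi-quadratic} with $M=2n$ and the homogeneity $\psi(cs,ct)=c\,\psi(s,t)$ with $c=e/n$ bounds their total by $4n\cdot\frac{e^2}{n}S_C=4e^2S_C$.

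On pairs with $\widehat C_{ii'}>2n$, do not use $\psi$ at all. Since $C_{ii'}\le n<\widehat C_{ii'}/2$, we get $|K_{ii'}|\ge\widehat C_{ii'}/2$. Also $0<\widehat C_{ii'}-d(i)d(i')/n\le\widehat C_{ii'}$, so each such square is at most $4K_{ii'}^2$. Your middle-term bound then gives a total of at most $4nD_R$.

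With your coefficient $4$, this step contributes $16e^2S_C+16nD_R$. Adding the $4nD_R$ from the middle term gives exactly the $20nD_R$ in the statement, so the slack you noticed is precisely what this step uses. This split is the paper's argument. With it inserted, the rest of your proposal goes through unchanged.
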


\begin{proof}
	Recall the weighted common-neighbor sums
	\[
	W_{ii'}=\sum_{j\in N(i)\cap N(i')}\frac1{d(j)}.
	\]
	Put
	\[
	\widehat C_{ii'}=\frac en W_{ii'},\qquad
	K_{ii'}=C_{ii'}-\widehat C_{ii'}.
	\]
	The factor $e/n$ is the average right degree. Thus
	$\widehat C_{ii'}=C_{ii'}$ when all right degrees are equal.
	In general,
	\[
	K_{ii'}=\sum_{j\in N(i)\cap N(i')}
	\left(1-\frac{e}{nd(j)}\right).
	\]
	Let $R_+=\{j\in R:d(j)>0\}$. Squaring and changing the order
	of summation gives
	\[
	\sum_{i,i'}K_{ii'}^2
	=\sum_{j,j'\in R_+}
	\left(1-\frac{e}{nd(j)}\right)
	\left(1-\frac{e}{nd(j')}\right)
	|N(j)\cap N(j')|^2.
	\]
	Since $|N(j)\cap N(j')|^2\le d(j)d(j')$, taking absolute
	values of the coefficients and applying Cauchy--Schwarz yields
	\begin{equation}\label{eq:K-square}
	\sum_{i,i'}K_{ii'}^2
			 \le\left(\sum_{j\in R_+}\left|d(j)-\frac en\right|\right)^2	\le n\sum_{j\in R_+}\left(d(j)-\frac en\right)^2
			\le nD_R.
	\end{equation}
		By the identity $\psi(cs,ct)=c\psi(s,t)$ for $c>0$ and
	the definition of $S_C$,
	\[
	\sum_{i,i'}\psi\left(\widehat C_{ii'},\frac{d(i)d(i')}{n}\right)
	=\frac en\sum_{i,i'}
	\psi\left(W_{ii'},\frac{d(i)d(i')}{e}\right)
	=\frac{e^2}{n}S_C.
	\]
	For pairs with $\widehat C_{ii'}\le2n$, both arguments of
	$\psi$ are at most $2n$. Hence \eqref{eq:psi-quadratic} gives
	\[
	\left(\widehat C_{ii'}-\frac{d(i)d(i')}{n}\right)^2
	\le4n\,\psi\left(\widehat C_{ii'},\frac{d(i)d(i')}{n}\right).
	\]
	If $d(i)d(i')=0$, both sides are zero.
	For pairs with $\widehat C_{ii'}>2n$, the bounds
	$C_{ii'}\le n$ and $d(i)d(i')/n\le n$ give
	\[
	|K_{ii'}|\ge\frac{\widehat C_{ii'}}2,
	\qquad
	\left(\widehat C_{ii'}-\frac{d(i)d(i')}{n}\right)^2
	\le\widehat C_{ii'}^{\,2}\le4K_{ii'}^2.
	\]
	Summing over these two sets of pairs and using
	\eqref{eq:K-square}, we obtain
	\begin{equation}\label{eq:weighted-common-square}
		\sum_{i,i'}
		\left(\widehat C_{ii'}-\frac{d(i)d(i')}{n}\right)^2
		\le4e^2S_C+4nD_R.
	\end{equation}
	As $C_{ii'}=K_{ii'}+\widehat C_{ii'}$, the inequality
	$(x+y)^2\le2x^2+2y^2$ now gives
	\begin{equation}\label{eq:common-square}
		\sum_{i,i'}\left(C_{ii'}-\frac{d(i)d(i')}{n}\right)^2
		\le8e^2S_C+10nD_R.
	\end{equation}
	Finally,
	\[
	B_{ii'}=C_{ii'}-\frac{d(i)d(i')}{n}
	+\frac1n\left(d(i)-\frac em\right)
	\left(d(i')-\frac em\right).
	\]
	Applying the same inequality once more gives
	\[
	R_4\le16e^2S_C+20nD_R+\frac{2D_L^2}{n^2}.
	\]
	Since $e\le mn$, $D_L\le mn^2$, and $S_L,S_R,S_C\le\delta$,
	\Cref{lem:degree-errors} bounds the three terms by
	$16m^2n^2\delta$, $40m^2n^2\delta$, and $4m^2n^2\delta$,
	respectively. This proves \eqref{eq:Rbound}.
\end{proof}

\begin{remark}\label{sec:boxnorm}
	The ratio $R_4/(m^2n^2)$ is the fourth power of the rectangular
	box norm of $A-\rho$, the centered four-cycle expression related
	to quasirandomness~\cite{CGW,Gowers}. Here \eqref{eq:Rbound}
	supplies the linear dependence on $\delta$ needed below.
\end{remark}

\subsection{Controlling full-link edge counts}

The full link of an edge $ij$ has $d(i)d(j)$ possible edges.
To compare its density with $\rho$, we therefore compare its
edge count with $\rho d(i)d(j)$. In this section, we use the  degree
and common-neighbor estimates from the previous section to bound both the sum of
squared edge-count errors and their signed sum over edges.
Both bounds are linear in $\delta$.

These estimates are the next step towards controlling
$q_{ij}-\rho$: the following subsection divides the edge-count
errors by $d(i)d(j)$ and controls the effect of these varying
denominators.

\begin{lemma}\label{lem:numerator}
	For every $i\in L$ and $j\in R$, define
	\begin{equation}\label{eq:T}
		T_{ij}=e_J(N(j),N(i))-\rho d(i)d(j).
	\end{equation}
	Then
	\begin{equation}\label{eq:T-bounds}
		\sum_{i,j}T_{ij}^2\le186m^3n^3\delta,
		\qquad
		\sum_{ij\in E(J)}T_{ij}\le65m^2n^2\delta.
	\end{equation}
	On every edge $ij$,
	\begin{equation}\label{eq:q-T}
		q_{ij}-\rho=\frac{T_{ij}}{d(i)d(j)},
	\end{equation}
	and for every pair $i,j$,
	\begin{equation}\label{eq:T-pointwise}
		|T_{ij}|\le d(i)d(j)\le mn.
	\end{equation}
\end{lemma}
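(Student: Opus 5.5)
The plan is to express $T_{ij}$ through the centered quantities $B_{ii'}$ of \eqref{eq:B-R} and the degree deviations. Then both bounds in \eqref{eq:T-bounds} reduce, via \Cref{lem:fourth} and \Cref{lem:degree-errors}, to $R_4$, $D_L$ and $D_R$. The two trivial parts come first. For \eqref{eq:q-T}, divide \eqref{eq:T} by $d(i)d(j)$ and use \eqref{eq:link-density}. For \eqref{eq:T-pointwise}, note that $e_J(N(j),N(i))$ and $\rho d(i)d(j)$ both lie in $[0,d(i)d(j)]$.

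\textbf{Decomposition of $T_{ij}$.} Write $a_i=d(i)-e/m$ and $b_j=d(j)-e/n$, and recall $\rho n=e/m$ and $\rho m=e/n$. As in the proof of \Cref{lem:log}, $e_J(N(j),N(i))=\sum_{i'}A_{i'j}C_{ii'}$. Substituting $C_{ii'}=B_{ii'}+\rho d(i)+\rho d(i')-\rho^2 n$, the $\rho d(i)$ term gives $\rho d(i)d(j)$, which cancels. Using $\rho^2 n\,d(j)=\rho\sum_{i'}A_{i'j}(e/m)$, this leaves
\[
T_{ij}=\sum_{i'}A_{i'j}B_{ii'}+\rho\sum_{i'}A_{i'j}a_{i'}=:X_{ij}+Y_j .
\]

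\textbf{Second moment.} By Cauchy--Schwarz, $X_{ij}^2\le d(j)\sum_{i'}B_{ii'}^2\le m\sum_{i'}B_{ii'}^2$. Summing over $i,j$ gives $\sum_{i,j}X_{ij}^2\le mnR_4\le 60m^3n^3\delta$ by \Cref{lem:fourth}. Similarly $Y_j^2\le\rho^2 m D_L$, so
\[
\sum_{i,j}Y_j^2\le m^2n D_L\le 2m^2n^2eS_L\le 2m^3n^3\delta
\]
by \Cref{lem:degree-errors}. Then $T^2\le 2X^2+2Y^2$ gives at most $124\,m^3n^3\delta$, within the claimed $186$.

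\textbf{Signed sum over edges.} This is the main step, because a Cauchy--Schwarz estimate would only give order $\sqrt\delta$. Instead I compute the sum exactly and check that every surviving term is quadratic in deviations.

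For the $X$ part, $\sum_{i,j}A_{ij}X_{ij}=\sum_{i,i'}B_{ii'}C_{ii'}$. Substituting for $C$ once more gives $R_4$ plus $\sum_{i,i'}B_{ii'}\bigl(\rho d(i)+\rho d(i')-\rho^2n\bigr)$. Set $\beta_i=\sum_{i'}B_{ii'}$. Expanding the definition of $B$ gives $\beta_i=\sum_j(A_{ij}-\rho)b_j$. By symmetry of $B$, the extra term equals $2\rho\sum_i a_i\beta_i+\rho^2 n\sum_i\beta_i$.
\begin{itemize}
\item The second piece is $\rho^2 n\sum_j(d(j)-\rho m)b_j=\rho^2 nD_R$.
\item The first piece is $2\rho\sum_{i,j}(A_{ij}-\rho)a_ib_j$. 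The $\rho$ part vanishes because $\sum_i a_i=0$. By Cauchy--Schwarz, $\bigl|\sum_{i,j}A_{ij}a_ib_j\bigr|\le\sqrt{nD_L\cdot mD_R}\le\tfrac12(nD_L+mD_R)$.
\end{itemize}

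For the $Y$ part, $\sum_{i,j}A_{ij}Y_j=\rho\sum_{i'}a_{i'}\sum_jA_{i'j}d(j)$. Writing $d(j)=e/n+b_j$ and using $\sum_{i'}a_{i'}d(i')=D_L$, this equals $\rho(e/n)D_L+\rho\sum_{i,j}A_{ij}a_ib_j$.

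Every term is now $R_4$, a multiple of $D_L$ or $D_R$, or the bilinear sum already bounded. \Cref{lem:fourth} and \Cref{lem:degree-errors} bound each by a constant times $m^2n^2\delta$, using $e\le mn$ and $\rho\le1$. Tracking the constants should give at most $65$.
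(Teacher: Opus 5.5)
Your decomposition $T_{ij}=X_{ij}+Y_j$ is the first form of the paper's expansion $T_{ij}=\sum_{i'\in N(j)}B_{ii'}+\rho\eta_j$. Your signed-sum computation reproduces the paper's exact identity $\sum_{ij\in E(J)}T_{ij}=\rho^2(mD_L+nD_R)+3\rho\Gamma+R_4$, where $\Gamma=\sum_{i,j}A_{ij}a_ib_j$. For the second moment you apply Cauchy--Schwarz to $X_{ij}$ directly instead of splitting it further as $\rho\xi_i+D_{ij}$. That is valid and even gives the better constant $124$.

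One step fails as written: the AM--GM pairing in your bound for $\Gamma$. The inequality $\sqrt{nD_L\cdot mD_R}\le\frac12(nD_L+mD_R)$ is true but not usable here. \Cref{lem:degree-errors} controls $mD_L+nD_R$, not $nD_L+mD_R$. For the latter it only gives
\[
nD_L+mD_R\le 2e(n^2S_L+m^2S_R)\le 2mn(n^2S_L+m^2S_R).
\]
This is not bounded by a constant times $m^2n^2\delta$ when $n/m$ (or $m/n$) is large and $S_L$ (or $S_R$) is comparable to $\delta$. That situation can occur, for instance, when the left degrees vary but the graph is otherwise random-like. So your closing claim that every term is $O(m^2n^2\delta)$ fails for the term $3\rho\cdot\frac12(nD_L+mD_R)$.

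The fix is to pair the factors the other way, using $(nD_L)(mD_R)=(mD_L)(nD_R)$:
\[
|\Gamma|\le\sqrt{(mD_L)(nD_R)}\le\tfrac12(mD_L+nD_R)\le mne\delta .
\]
The three terms are then bounded as follows:
\[
R_4\le60m^2n^2\delta,\qquad
\rho^2(mD_L+nD_R)\le2\rho^2mne\delta\le2m^2n^2\delta,\qquad
3\rho|\Gamma|\le3\rho mne\delta\le3m^2n^2\delta .
\]
They sum to the claimed $65m^2n^2\delta$, which is exactly how the paper finishes the argument.
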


\begin{proof}
	The definition of the full-link density gives \eqref{eq:q-T}.
	Also, $0\le e_J(N(j),N(i))\le d(i)d(j)$, so both terms in
	\eqref{eq:T} lie between zero and $d(i)d(j)$. This proves
	\eqref{eq:T-pointwise}.
	
	To prove the two sum bounds, put
	\[
	\xi_i=\sum_{j\in N(i)}\left(d(j)-\frac en\right),\qquad
	\eta_j=\sum_{i\in N(j)}\left(d(i)-\frac em\right),\qquad
	D_{ij}=\sum_{i'}B_{ii'}(A_{i'j}-\rho).
	\]
	The first two quantities sum degree deviations over a
	neighborhood. Counting common neighbors first gives
	\[
	\sum_{i'}C_{ii'}=\sum_{j\in N(i)}d(j),
	\qquad
	\sum_{i'}B_{ii'}=\xi_i.
	\]
	Moreover,
	$e_J(N(j),N(i))=\sum_{i'\in N(j)}C_{ii'}$.
	Substituting the definition of $B_{ii'}$ gives
	\begin{equation}\label{eq:T-expansion}
		T_{ij}=\sum_{i'\in N(j)}B_{ii'}+\rho\eta_j
		=\rho\xi_i+\rho\eta_j+D_{ij}.
	\end{equation}
		By Cauchy--Schwarz on each neighborhood,
	\[
		\sum_i\xi_i^2
		\le\sum_i d(i)\sum_{j\in N(i)}\left(d(j)-\frac en\right)^2
		\le n\sum_j d(j)\left(d(j)-\frac en\right)^2
		\le mnD_R.
	\]
	Similarly, $\sum_j\eta_j^2\le mnD_L$.
	For $D_{ij}$, Cauchy--Schwarz gives
	\[
	D_{ij}^2\le
	\left(\sum_{i'}B_{ii'}^2\right)
	\left(\sum_{i'}(A_{i'j}-\rho)^2\right).
	\]
	The factors depend only on $i$ and $j$, respectively. Since
	\begin{equation}\label{eq:U-square}
		\sum_{i,j}(A_{ij}-\rho)^2=e(1-\rho)\le mn,
	\end{equation}
	summing the preceding inequality yields
	\[
	\sum_{i,j}D_{ij}^2
	\le\left(\sum_{i,i'}B_{ii'}^2\right)
	\left(\sum_{i',j}(A_{i'j}-\rho)^2\right)
	\le mnR_4.
	\]
	Consequently, \eqref{eq:T-expansion} and
	$(x+y+z)^2\le3(x^2+y^2+z^2)$ imply
	\[
\begin{aligned}
 \sum_{i,j}T_{ij}^2
 &\le3\rho^2\left(n\sum_i\xi_i^2+m\sum_j\eta_j^2\right)+3\sum_{i,j}D_{ij}^2
 \le3mn\bigl(\rho^2(mD_L+nD_R)+R_4\bigr)\\
 &\le6\rho^2m^2n^2e\delta+180m^3n^3\delta\le186m^3n^3\delta.
\end{aligned}
\]
	Here we used \Cref{lem:degree-errors,lem:fourth} and $e\le mn$.
		For the signed sum, define the mixed degree error
	\begin{equation}\label{eq:gamma}
	\Gamma
			=\sum_{ij\in E(J)}
			\left(d(i)-\frac em\right)\left(d(j)-\frac en\right)
			=\sum_i\left(d(i)-\frac em\right)\xi_i
			=\sum_j\left(d(j)-\frac en\right)\eta_j.
		\end{equation}
	Changing the order of summation gives
	\[
	\sum_i\xi_i=\sum_jd(j)\left(d(j)-\frac en\right)=D_R,
	\qquad
	\sum_j\eta_j=D_L.
	\]
	It follows that
	\[
	\sum_{ij\in E(J)}\xi_i=\frac emD_R+\Gamma,
	\qquad
	\sum_{ij\in E(J)}\eta_j=\frac enD_L+\Gamma.
	\]
	Also, by the definitions of $D_{ij}$ and $B_{ii'}$,
	\[
		\sum_{ij\in E(J)}D_{ij}
		=\sum_{i,i'}B_{ii'}\bigl(C_{ii'}-\rho d(i)\bigr)
		=R_4+\rho\sum_{i'}\left(d(i')-\frac em\right)
		\sum_iB_{ii'}
		=R_4+\rho\Gamma.
	\]
	For the last equality, use the symmetry of $B$ and
	$\sum_iB_{ii'}=\xi_{i'}$. Summing \eqref{eq:T-expansion}
	over the edges therefore gives the exact identity
	\begin{equation}\label{eq:AT}
		\sum_{ij\in E(J)}T_{ij}
		=\rho^2(mD_L+nD_R)+3\rho\Gamma+R_4.
	\end{equation}
	Finally, Cauchy--Schwarz and the bound on $\sum_i\xi_i^2$ give
	\[
	|\Gamma|\le\sqrt{D_L\sum_i\xi_i^2}
	\le\sqrt{mnD_LD_R}
	\le\frac{mD_L+nD_R}{2}\le mne\delta.
	\]
	Substitution into \eqref{eq:AT} yields
	\[
	\sum_{ij\in E(J)}T_{ij}
	\le2\rho^2mne\delta+3\rho mne\delta+60m^2n^2\delta
	\le65m^2n^2\delta,
	\]
	as required.
\end{proof}

\subsection{Completing the proof of Lemma~\ref{lem:stability}}

The preceding subsection controls the edge-count errors
$T_{ij}$. To obtain the required density estimates, we use
\eqref{eq:q-T}:
\[
q_{ij}-\rho=\frac{T_{ij}}{d(i)d(j)}.
\]
The remaining difficulty is that the denominator varies with
the edge and may be small. We first use the degree estimates
to control the contribution of edges with a small endpoint
degree. On the remaining edges, a lower bound on $d(i)d(j)$
suffices to bound the sum of squared density errors.

The signed sum requires more care. We compare $1/(d(i)d(j))$
with the constant $mn/e^2$, the reciprocal of the product of
the average degrees. This allows us to use the signed-sum
bound from \Cref{lem:numerator} and then control the error
introduced by replacing the denominator.

	\begin{proof}[Proof of \Cref{lem:stability}]
		Recall that we must show
		\[
		0\le\frac1e\sum_{ij\in E(J)}(q_{ij}-\rho)\le10^4\rho^{-5}\delta,
		\qquad
		\frac1e\sum_{ij\in E(J)}(q_{ij}-\rho)^2\le10^4\rho^{-5}\delta.
		\]
		We first bound the sum of squares, then prove the upper
		and lower bounds for the signed sum.
		Let $\mathcal B$ be the set of edges $ij$ with
	$d(i)<e/(2m)$ or $d(j)<e/(2n)$.
	There are at most $4m^2D_L/e^2$ left vertices with
	$d(i)<e/(2m)$, since each contributes more than
	$e^2/(4m^2)$ to $D_L$. Each has fewer than $e/(2m)$ incident
	edges, so together they meet at most $2mD_L/e$ edges.
	The corresponding right-side count is at most $2nD_R/e$.
	Thus
	\begin{equation}\label{eq:bad-fraction}
		|\mathcal B|\le\frac{2(mD_L+nD_R)}e\le4mn\delta,
		\qquad
		\frac{|\mathcal B|}{e}\le4\rho^{-1}\delta.
	\end{equation}
		Outside $\mathcal B$, we have $d(i)d(j)\ge e^2/(4mn)$.
	Since $|q_{ij}-\rho|\le1$ on every edge,
	\eqref{eq:q-T} and \eqref{eq:T-bounds} give
	\begin{equation}\label{eq:second-final}
\begin{aligned}
 \frac1e\sum_{ij\in E(J)}(q_{ij}-\rho)^2
 &\le\frac{|\mathcal B|}{e}
 +\frac1e\sum_{ij\in E(J)\setminus\mathcal B}\frac{T_{ij}^2}{d(i)^2d(j)^2}\\
 &\le4\rho^{-1}\delta+\frac{16m^2n^2}{e^5}\sum_{i,j}T_{ij}^2
 \le4\rho^{-1}\delta+2976\rho^{-5}\delta\le2980\rho^{-5}\delta.
\end{aligned}
\end{equation}
		For the signed sum, applying Cauchy--Schwarz directly to
	\eqref{eq:second-final} would give a bound proportional to
	$\sqrt\delta$. 
		We split the signed average into contributions from
	$\mathcal B$ and its complement
	$\mathcal G=E(J)\setminus\mathcal B$. Define
	\[
	M_{\mathcal B}=\frac1e\sum_{ij\in\mathcal B}(q_{ij}-\rho),
	\qquad
	M_{\mathcal G}=\frac1e\sum_{ij\in\mathcal G}(q_{ij}-\rho).
	\]
	Thus
	\[
	\frac1e\sum_{ij\in E(J)}(q_{ij}-\rho)
	=M_{\mathcal B}+M_{\mathcal G}.
	\]
	We bound these two contributions separately.
		First, on $\mathcal B$, the bound $|q_{ij}-\rho|\le1$
	and \eqref{eq:bad-fraction} give
	\[
	|M_{\mathcal B}|
	\le\frac{|\mathcal B|}{e}
	\le4\rho^{-1}\delta.
	\]
		Next, consider $M_{\mathcal G}$. By \eqref{eq:q-T},
	\[
	M_{\mathcal G}
	=\frac1e\sum_{ij\in\mathcal G}\frac{T_{ij}}{d(i)d(j)}.
	\]
	We compare $d(i)d(j)$ with the product of the average
	degrees, $e^2/(mn)$, and write
	\[
	M_{\mathcal G}
	=\frac{mn}{e^3}\sum_{ij\in\mathcal G}T_{ij}+R_{\mathcal G},
	\]
	where
	\[
	R_{\mathcal G}
	=\frac1e\sum_{ij\in\mathcal G}
	T_{ij}\left(\frac1{d(i)d(j)}-\frac{mn}{e^2}\right).
	\]	We now bound these two terms.
		The signed-sum estimate in \eqref{eq:T-bounds} applies
	to all edges. To use it for $\mathcal G$, observe that
	\[
	\sum_{ij\in\mathcal G}T_{ij}
	=\sum_{ij\in E(J)}T_{ij}-\sum_{ij\in\mathcal B}T_{ij}.
	\]
	Since $|T_{ij}|\le mn$ and $|\mathcal B|\le4mn\delta$,
	\[
	\frac{mn}{e^3}\sum_{ij\in\mathcal G}T_{ij}
	\le\frac{mn}{e^3}
	\left(65m^2n^2\delta+mn|\mathcal B|\right)
	\le69\rho^{-3}\delta.
	\]
		It remains to bound $R_{\mathcal G}$. For $ij\in\mathcal G$,
	we have $d(i)d(j)\ge e^2/(4mn)$, so
	\[
	\left|\frac1{d(i)d(j)}-\frac{mn}{e^2}\right|
	\le\frac{4m^2n^2}{e^4}
	\left|d(i)d(j)-\frac{e^2}{mn}\right|.
	\]
	Using
	\[
	d(i)d(j)-\frac{e^2}{mn}
	=d(i)\left(d(j)-\frac en\right)
	+\frac en\left(d(i)-\frac em\right),
	\]
	together with $d(i)\le n$ and $e/n\le m$, we obtain
	\[
	\left|d(i)d(j)-\frac{e^2}{mn}\right|
	\le m\left|d(i)-\frac em\right|
	+n\left|d(j)-\frac en\right|.
	\]
		By \Cref{lem:degree-errors} and $e\le mn$,
	\[
	\sum_{i,j}
	\left(m\left|d(i)-\frac em\right|
	+n\left|d(j)-\frac en\right|\right)^2
	\le2mn(mD_L+nD_R)
	\le4m^3n^3\delta.
	\]
	Here the sum is over all pairs in $L\times R$.
	Applying the reciprocal bound and extending the resulting
	nonnegative sum from $\mathcal G$ to all such pairs gives
	\[
	|R_{\mathcal G}|
	\le\frac{4m^2n^2}{e^5}\sum_{i,j}|T_{ij}|
	\left(m\left|d(i)-\frac em\right|
	+n\left|d(j)-\frac en\right|\right).
	\]
	Cauchy--Schwarz and \eqref{eq:T-bounds} now yield
	\[
	|R_{\mathcal G}|
	\le\frac{4m^2n^2}{e^5}
	\sqrt{186m^3n^3\delta\cdot4m^3n^3\delta}
	\le112\rho^{-5}\delta.
	\]
	Consequently,
	\[
	M_{\mathcal G}\le69\rho^{-3}\delta+112\rho^{-5}\delta.
	\]
	Combining the bounds for $M_{\mathcal B}$ and
	$M_{\mathcal G}$ gives
	\begin{equation}\label{eq:meanfinal}
		\frac1e\sum_{ij\in E(J)}(q_{ij}-\rho)
		\le4\rho^{-1}\delta+69\rho^{-3}\delta+112\rho^{-5}\delta
		\le185\rho^{-5}\delta.
	\end{equation}
	Finally, \Cref{lem:AMGM} and \eqref{eq:log-gap} give
	\[
	\frac1e\sum_{ij\in E(J)}q_{ij}
	\ge\exp\!\left(\frac1e\sum_{ij\in E(J)}\log q_{ij}\right)
	=\rho\exp(\delta)\ge\rho.
	\]
	This proves the lower bound in \eqref{eq:stability}; its two
	upper bounds follow from \eqref{eq:second-final} and
	\eqref{eq:meanfinal}.
\end{proof}

\section{Proof of the one-step density theorem}\label{sec:reduction}

We now combine the estimates from
Sections~\ref{sec:log}--\ref{sec:stability} to prove
\Cref{thm:one-step}, with the potential and thresholds defined
in \eqref{eq:potential} and \eqref{eq:Fdef}.
After proving the theorem, we deduce a density criterion for
arbitrary bipartite graphs and then the main results.

\subsection{Verifying the one-step inequality}\label{sec:induction}

\begin{proof}[Proof of \Cref{thm:one-step}]
	Let $m,n,\rho$ be the class sizes and density of $G$, and write
	$e=e(G)$. Since $\calP_k(G)>F_k$, we have $e>0$ and $0<\rho<1$.
	For each edge $xy$, put $u=d_G(y)$ and $v=d_G(x)$, and let
	$q,q'$ be the densities of its full and residual links,
	respectively. Set $q'=0$ if a residual class is empty.
	These quantities depend on $xy$ in every edge sum below.
	
	The additive loss we allow is
	\begin{equation}\label{eq:expected-notation}
		\varepsilon=\frac{F_{k-1}}{k^2},
	\end{equation}
	and the corrected residual potential is
	\begin{equation}\label{eq:edge-quantities}
		S_{xy}=\calP_{k-1}(G'_{xy})
		=H(u-1,v-1)(q')^{k+19}(1-q')^{20}.
	\end{equation}
		In particular, $S_{xy}=0$ if a residual class is empty.
	Our goal is
	\[
	\frac1e\sum_{xy\in E(G)}S_{xy}
	\ge\calP_k(G)-\varepsilon>F_{k-1}.
	\]
	It suffices to prove the stronger logarithmic inequality
	\begin{equation}\label{eq:one-step-log-target}
		\frac1e\sum_{xy\in E(G)}\log(S_{xy}+\varepsilon)
		\ge\log\calP_k(G).
	\end{equation}
	The positive shift $\varepsilon$ makes every logarithm
	defined, even when $S_{xy}=0$.
		Indeed, once \eqref{eq:one-step-log-target} is established,
	concavity of the logarithm, applied with equal weights $1/e$
	to the $e$ edges, gives
	\[
	\log\calP_k(G)
	\le\frac1e\sum_{xy\in E(G)}\log(S_{xy}+\varepsilon)
	\le\log\left(\frac1e\sum_{xy\in E(G)}S_{xy}+\varepsilon\right).
	\]
	Here we used that $\varepsilon$ is the same for every edge.
	Exponentiating and subtracting $\varepsilon$ gives the
	required lower bound on the arithmetic average. The strict
	inequality follows from $\calP_k(G)>F_k$ and
	$F_k-F_{k-1}>\varepsilon$.
		The inequalities will be proved differently in two different ranges of full-link densities:
	$q\le(1+\rho)/2$, and
	$q>(1+\rho)/2$. 
	
	\medskip
	\noindent\emph{1. The available estimates.}
	We use three previously established estimates. First, apply
	\eqref{eq:edge-log} with exponent $k+19=(k-1)+20$ and add
	$20\log(1-\rho)$ to obtain
	\begin{equation}\label{eq:Zlower}
		\frac1e\sum_{xy\in E(G)}
		\log\bigl(H(u,v)q^{k+19}(1-\rho)^{20}\bigr)
		\ge\log\calP_k(G)+(k+19)\delta.
	\end{equation}
		Second, \Cref{lem:deletion} gives
	\[
	H(u,v)q^{k+19}
	\le H(u-1,v-1)(q')^{k+19}
	+\min\{2,5(k+19)q^{k+18}\}.
	\]
	Third, \Cref{lem:stability} gives
	\[
	\begin{aligned}
		0\le\frac1e\sum_{xy\in E(G)}(q-\rho)
		&\le10^4\rho^{-5}\delta,\\
		\frac1e\sum_{xy\in E(G)}(q-\rho)^2
		&\le10^4\rho^{-5}\delta.
	\end{aligned}
	\]
	We will use the last two estimates to show that passing from
	the expression inside the logarithm in \eqref{eq:Zlower} to
	$S_{xy}+\varepsilon$ loses at most $k\delta$ on average.
	
	Recall that $A=2^{40}$ and $F_j\ge A$ for every $j\ge1$.
	Now  \eqref{eq:parent-density-hyp}
	implies
	\begin{equation}\label{eq:nontrivial}
		A\le F_k<\calP_k(G)
		=B_k(G)[\rho(1-\rho)]^{20}
		\le2(k-1)[\rho(1-\rho)]^{20}.
	\end{equation}
	Consequently,
	\begin{equation}\label{eq:pzlower}
		k>A/2,\qquad [\rho(1-\rho)]^{20}>\frac{A}{2k}.
	\end{equation}
	Thus, under the bound $B_k(G)\le2(k-1)$, the condition
	$\calP_k(G)>F_k$ forces the parent density $\rho$

	away from both
	endpoints, by an amount depending on $k$.
	This restriction will control both the refined deletion loss
	and the coefficient of the averaged density error.
	
	\medskip
	\noindent\emph{2. Links with $q\le(1+\rho)/2$.}
	We first bound the deletion error $5(k+19)q^{k+18}$ by
	$\varepsilon$. From \eqref{eq:pzlower},
	$(1-\rho)^{20}>A/(2k)$. Since $0<1-\rho<1$, we obtain
	\begin{equation}\label{eq:zlower}
		1-\rho\ge\sqrt{\frac{A}{2k}}.
	\end{equation}
	Using $q\le1-(1-\rho)/2$, $1-s\le\exp(-s)$,
	$F_{k-1}\ge A$, and $k>A/2>19$, we get
	\begin{equation}\label{eq:deletion-exp}
		\frac{5(k+19)q^{k+18}}{F_{k-1}}
		\le\frac{10k}{A}\exp\!\left(-\frac{k(1-\rho)}2\right)
		\le\frac{10k}{A}\exp\!\left(-\sqrt{\frac{Ak}{8}}\right).
	\end{equation}
	The inequality $\exp(x)\ge x^6/6!$ bounds the final expression by
	\[
	\frac{10\cdot8^3\cdot6!}{A^4k^2}
	=\frac{3{,}686{,}400}{A^4k^2}\le\frac1{k^2}.
	\]
	Hence
	\begin{equation}\label{eq:small-deletion}
		5(k+19)q^{k+18}\le\varepsilon.
	\end{equation}
	Since $q'\le q$, multiplying the deletion bound by
	$(1-q)^{20}$ gives
	\begin{equation}\label{eq:lowq}
		H(u,v)q^{k+19}(1-q)^{20}
		\le S_{xy}+\varepsilon(1-q)^{20}
		\le S_{xy}+\varepsilon.
	\end{equation}
	Thus the factor $(1-q')^{20}$ causes no additional deletion
	loss. To compare with the parent factor, note that on the
	interval between $\rho$ and $q$, we have $1-s\ge(1-\rho)/2$.
	The second derivative of $20\log(1-s)$ is therefore at least
	$-80/(1-\rho)^2$, so Taylor's inequality gives
	\begin{equation}\label{eq:Taylor}
		20\log(1-q)
		\ge20\log(1-\rho)-\frac{20(q-\rho)}{1-\rho}
		-\frac{40(q-\rho)^2}{(1-\rho)^2}.
	\end{equation}
	Combining this with the logarithm of \eqref{eq:lowq} bounds
	the loss by a linear and a quadratic term in $q-\rho$.
	
	\medskip
	\noindent\emph{3. Links with $q>(1+\rho)/2$.}
	Here we use the constant deletion bound and the residual
	density hypothesis \eqref{eq:residual-density-hyp}:
	\begin{equation}\label{eq:Zcap}
		H(u,v)q^{k+19}
		\le B_{k-1}(G'_{xy})(q')^{20}+2
		\le2(k-2)+2\le2k.
	\end{equation}
	Since $S_{xy}+\varepsilon\ge\varepsilon\ge1/k^2$, we have
	\[
	\log\frac{H(u,v)q^{k+19}(1-\rho)^{20}}
	{S_{xy}+\varepsilon}
	\le\log\frac{2k}{\varepsilon}
	\le\log(2k^3)\le3\log(2k).
	\]
	The inequality $q-\rho>(1-\rho)/2$ now gives
	\[
	3\log(2k)
	\le\frac{12\log(2k)}{(1-\rho)^2}(q-\rho)^2.
	\]
	The logarithmic loss is therefore bounded by a multiple of
	$(q-\rho)^2$, even when $q=1$ or $S_{xy}=0$.
	
	\medskip
	\noindent\emph{4. Averaging and completing the proof.}
	The estimates in the two ranges give, for every edge,
	\begin{equation}\label{eq:globalTaylor}
		\begin{aligned}
			\log(S_{xy}+\varepsilon)
			&\ge\log\bigl(H(u,v)q^{k+19}(1-\rho)^{20}\bigr)\\
			&\quad-\frac{20(q-\rho)}{1-\rho}
			-\frac{40+12\log(2k)}{(1-\rho)^2}(q-\rho)^2.
		\end{aligned}
	\end{equation}
	In the first range, the extra quadratic term only weakens
	the bound from \eqref{eq:lowq} and \eqref{eq:Taylor}. In the
	second range, the additional terms subtracted on the right
	are nonnegative because $q-\rho>0$. Having one inequality
	over all edges preserves the cancellation in the signed
	linear term when we average.
	
	By \Cref{lem:stability}, the average of the two error terms
	in \eqref{eq:globalTaylor} is at most
	\begin{equation}\label{eq:Ldef}
		10^4\rho^{-5}\left(\frac{20}{1-\rho}
		+\frac{40+12\log(2k)}{(1-\rho)^2}\right)\delta
		\le\frac{10^4\rho^{-5}}{(1-\rho)^2}
		\bigl(60+12\log(2k)\bigr)\delta.
	\end{equation}
	To show that this is at most $k\delta$, use the density
	restriction \eqref{eq:pzlower}:
	\begin{equation}\label{eq:Lratio}
		\rho^{-5}(1-\rho)^{-2}
		\le[\rho(1-\rho)]^{-5}
		<\left(\frac{2k}{A}\right)^{1/4}.
	\end{equation}
	The coefficient of $\delta$ on the right of \eqref{eq:Ldef},
	divided by $k$, is therefore at most
	\[
	10^4\left(\frac2A\right)^{1/4}
	k^{-3/4}\bigl(60+12\log(2k)\bigr).
	\]
	The function $x^{-3/4}(60+12\log(2x))$ decreases for $x\ge1$,
	since its derivative has the sign of
	$12-\frac34(60+12\log(2x))<0$. As $k>A/2$, the last display
	is at most
	\begin{equation}\label{eq:Lsmall}
		\frac{2\cdot10^4}{A}(60+12\log A)<1.
	\end{equation}
	Indeed, $\log A<28$ and $A=2^{40}>10^{12}$, whereas the
	numerator is less than $7{,}920{,}000$.
	
	Averaging \eqref{eq:globalTaylor} consequently gives
	\begin{equation}\label{eq:Zupper}
		\frac1e\sum_{xy\in E(G)}\log(S_{xy}+\varepsilon)
		\ge\frac1e\sum_{xy\in E(G)}
		\log\bigl(H(u,v)q^{k+19}(1-\rho)^{20}\bigr)-k\delta.
	\end{equation}
	Combining this with \eqref{eq:Zlower} and $\delta\ge0$ yields
	\begin{equation}\label{eq:shifted-log-potential}
		\frac1e\sum_{xy\in E(G)}\log(S_{xy}+\varepsilon)
		\ge\log\calP_k(G)+19\delta\ge\log\calP_k(G),
	\end{equation}
	which proves \eqref{eq:one-step-log-target} and, by the
	reduction at the beginning, completes the proof.
\end{proof}

\subsection{Deducing a biclique from the potential}\label{sec:application}

We next combine \Cref{thm:one-step} with the preliminary density
criterion to obtain a statement for arbitrary bipartite graphs.

\begin{corollary}\label{thm:density}
For every positive integer $k$ and every bipartite graph $G$,
\begin{equation}\label{eq:mainpotential}
 \calP_k(G)>F_k
 \quad\Longrightarrow\quad G\text{ contains }K_{k,k}.
\end{equation}
\end{corollary}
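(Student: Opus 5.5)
We argue by induction on $k$, combining \Cref{lem:coarse} with \Cref{thm:one-step}.

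\emph{Base case $k=1$.} If $\calP_1(G)>F_1=A>0$, then $G$ has two nonempty classes and positive density. In particular $G$ has an edge, which is a $K_{1,1}$.

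\emph{Inductive step.} Let $k\ge2$, assume \eqref{eq:mainpotential} holds for $k-1$, and suppose $\calP_k(G)>F_k$. We split into three cases according to which numerical hypotheses of \Cref{thm:one-step} hold.

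First, suppose \eqref{eq:parent-density-hyp} fails, so $B_k(G)>2(k-1)$. Then \Cref{lem:coarse} gives a $K_{k,k}$ in $G$ directly.

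Second, suppose \eqref{eq:parent-density-hyp} holds but \eqref{eq:residual-density-hyp} fails. Then some edge $xy$ satisfies $B_{k-1}(G'_{xy})>2(k-2)$. By \Cref{lem:coarse}, $G'_{xy}$ contains a $K_{k-1,k-1}$. Every vertex of $G'_{xy}$ on the left lies in $N_G(y)$ and every vertex on the right lies in $N_G(x)$, and $x,y$ are not in $G'_{xy}$. Adding $x$ and $y$ therefore produces a $K_{k,k}$ in $G$.

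Third, suppose both hypotheses hold. Together with $\calP_k(G)>F_k$, \Cref{thm:one-step} gives an edge $xy$ with $\calP_{k-1}(G'_{xy})>F_{k-1}$. The induction hypothesis, applied to $G'_{xy}$, gives a $K_{k-1,k-1}$ there, and the same extension by $x$ and $y$ gives a $K_{k,k}$ in $G$.

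No step is a real obstacle, since all the analytic work is contained in \Cref{thm:one-step}. Two points need care. The induction must be on $k$ over all bipartite graphs simultaneously, so that the hypothesis applies to the residual link $G'_{xy}$, which may have unequal or small classes. We must also confirm that the case split is exhaustive. The hypotheses of \Cref{thm:one-step} concern only $B_k(G)$ and the values $B_{k-1}(G'_{xy})$, and each failure of them is handled by \Cref{lem:coarse}. Finally, when $G'_{xy}$ has an empty class its potential is zero, so the induction hypothesis is only invoked on graphs with positive potential, as required.
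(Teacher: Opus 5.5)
Your proposal is correct and follows essentially the same route as the paper. Both argue by induction on $k$ over all bipartite graphs, dispose of failures of \eqref{eq:parent-density-hyp} and \eqref{eq:residual-density-hyp} via \Cref{lem:coarse} (extending by $x,y$ in the residual case), and otherwise invoke \Cref{thm:one-step} and the induction hypothesis on the chosen residual link.
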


\begin{proof}
We induct on $k$. For $k=1$, positive potential implies positive
density, so $G$ has an edge. Now let $k\ge2$, assume the result
for $k-1$, and suppose $\calP_k(G)>F_k$.

If $B_k(G)>2(k-1)$, then \Cref{lem:coarse} gives a $K_{k,k}$
in $G$ directly. We may therefore assume
$B_k(G)\le2(k-1)$.

If some edge $xy$ has $B_{k-1}(G'_{xy})>2(k-2)$, then
\Cref{lem:coarse}, applied with $k-1$ to that residual link,
gives a $K_{k-1,k-1}$. Adding $x,y$ yields a $K_{k,k}$ in $G$.
We may therefore also assume
$B_{k-1}(G'_{xy})\le2(k-2)$ for every edge.

The hypotheses of \Cref{thm:one-step} now hold, so some edge
satisfies $\calP_{k-1}(G'_{xy})>F_{k-1}$.
The induction hypothesis gives a $K_{k-1,k-1}$ in that residual
link, which again extends to a $K_{k,k}$ in $G$.
\end{proof}

\subsection{Substitution of \texorpdfstring{$N=c2^t$}{N = c times 2 to the t}}
\label{sec:half-application}

We now deduce \Cref{thm:half,thm:ramsey} from \Cref{thm:density}.
For a graph with two classes of size $N$ and density exactly $1/2$,
the potential is
\[
 \calP_t(G)=N2^{-t-40}.
\]
Thus $N\ge2^{t+81}$ makes the potential at least $2^{41}>F_t$.
The following reduction handles starting density at least $1/2$.

\begin{proof}[Proof of \Cref{thm:half}]
Put $M=2^{t+81}$. If $N\ge M$ and $e(G)\ge N^2/2$, sum $e_G(S,T)$
over all $M$-element sets $S\subseteq L$, $T\subseteq R$. Every
edge is counted $\binom{N-1}{M-1}^2$ times, so the mean is
\[
 \frac{e(G)\binom{N-1}{M-1}^2}{\binom NM^2}
 =e(G)(M/N)^2\ge M^2/2.
\]
Some pair $S,T$ therefore spans at least $M^2/2$ edges. Since $M$
is even, keep exactly $M^2/2$ of those edges. The resulting graph
$J$ has two classes of size $M$ and density exactly $1/2$, so
\[
 \calP_t(J)=M2^{-t-40}=2^{41}>F_t.
\]
By \Cref{thm:density}, $J$ contains a $K_{t,t}$, and hence so does $G$.
\end{proof}

The passage to exactly half the edges is necessary because the
potential in \eqref{eq:potential} is not increasing throughout $0<\rho<1$.
We do not simply substitute $\rho=1/2$ into an inequality whose actual
density might be larger.

\begin{proof}[Proof of \Cref{thm:ramsey}]
In a red--blue coloring of $K_{N,N}$, one color has at least
$N^2/2$ edges. For $N=2^{t+81}$, \Cref{thm:half} applied to that
color gives a monochromatic $K_{t,t}$.
\end{proof}

\appendix
\section{Standard  inequalities}\label{app:scalar}

The purpose of this appendix is to record and prove the two
elementary convexity inequalities used in the main text. All graph-specific lemmas
and their proofs are in the sections where they are needed.

\begin{lemma}\label{lem:AMGM}
If $\lambda_1,\ldots,\lambda_s\ge0$ have sum one and
$x_1,\ldots,x_s>0$, then
\begin{equation}\label{eq:AMGM}
 \sum_{a=1}^s\lambda_a\log x_a
 \le\log\left(\sum_{a=1}^s\lambda_a x_a\right).
\end{equation}
\end{lemma}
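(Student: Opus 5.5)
This is the weighted arithmetic--geometric mean inequality, i.e.\ Jensen's inequality for the concave function $\log$ on $(0,\infty)$. I would prove it from the tangent-line bound for the logarithm, so that no general form of Jensen's inequality needs to be invoked.

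\emph{Step 1: set up the tangent line.} Put $\bar x=\sum_a\lambda_a x_a$. Since the $\lambda_a$ are nonnegative and sum to one, at least one $\lambda_a$ is positive. For that index $x_a>0$, so $\bar x>0$ and $\log\bar x$ is defined.

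\emph{Step 2: the pointwise bound.} For every $z>0$ we have $\log z\le z-1$. This holds because the function $g(z)=z-1-\log z$ satisfies $g'(z)=1-1/z$, so $g$ decreases on $(0,1]$ and increases on $[1,\infty)$, with minimum $g(1)=0$. Applying it with $z=x_a/\bar x$ gives
\[
\log x_a\le\log\bar x+\frac{x_a}{\bar x}-1
\qquad\text{for each }a.
\]

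\emph{Step 3: average.} Multiply the inequality for index $a$ by $\lambda_a\ge0$ and sum over $a$. Using $\sum_a\lambda_a=1$ and $\sum_a\lambda_a x_a=\bar x$, the linear terms contribute $\bar x/\bar x-1=0$. What remains is exactly \eqref{eq:AMGM}.

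Terms with $\lambda_a=0$ contribute nothing on either side, which is consistent with the convention used in the main text. There is no real obstacle. The only points needing care are that $\bar x>0$, so the logarithm is defined, and that the weights are nonnegative, so multiplying preserves the inequality.
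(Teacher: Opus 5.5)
Your proof is correct and is the same as the paper's: set $\bar x=\sum_a\lambda_a x_a$, apply $\log z\le z-1$ with $z=x_a/\bar x$, multiply by $\lambda_a$ and sum, so that the linear terms cancel. You also check that $\bar x>0$ and derive $\log z\le z-1$; the paper leaves both implicit.
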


\begin{proof}
Put $M=\sum_a\lambda_a x_a$. The elementary inequality
$\log z\le z-1$, applied to $z=x_a/M$ and multiplied by $\lambda_a$,
gives
\[
 \sum_a\lambda_a\log(x_a/M)
 \le\sum_a\lambda_a(x_a/M-1)=0.
\]
This is \eqref{eq:AMGM}.
\end{proof}

\begin{lemma}\label{lem:remainder}
For $s\ge0$ and $t>0$, put
\begin{equation}\label{eq:psi}
 \psi(s,t)=s\log(s/t)-s+t,
\end{equation}
where $0\log(0/t)=0$. Then $\psi(s,t)\ge0$.
If $0\le s\le M$ and $0<t\le M$, then
\begin{equation}\label{eq:psi-quadratic}
 \psi(s,t)\ge\frac{(s-t)^2}{2M}.
\end{equation}
We also put $\psi(0,0)=0$.
\end{lemma}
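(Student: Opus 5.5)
Both parts of \Cref{lem:remainder} will come from a one-variable convexity argument in $s$ with $t$ fixed. The function $\psi(\cdot,t)$ is continuous on $[0,\infty)$, with $\psi(0,t)=t$ matching the limit $s\log(s/t)\to0$ as $s\to0^+$. For $s>0$ we have
\[
\partial_s\psi(s,t)=\log(s/t),\qquad
\partial_s^2\psi(s,t)=\frac1s .
\]
Thus $\psi(\cdot,t)$ is convex and has a critical point at $s=t$, where $\psi(t,t)=0$.

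For nonnegativity, convexity together with the vanishing derivative at $s=t$ gives $\psi(s,t)\ge\psi(t,t)=0$ for all $s>0$. The endpoint case is immediate, since $\psi(0,t)=t>0$. Alternatively, one can write $\psi(s,t)=t\,\phi(s/t)$ with $\phi(z)=z\log z-z+1$ and use $\log z\ge1-1/z$, which is the same elementary inequality $\log z\le z-1$ used in the proof of \Cref{lem:AMGM}.

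For the quadratic bound, use Taylor's theorem with integral remainder about $s=t$. Since $\psi(t,t)=0$ and $\partial_s\psi(t,t)=0$,
\[
\psi(s,t)=\int_t^s\frac{s-\sigma}{\sigma}\,d\sigma .
\]
Whether $s\ge t$ or $s<t$, the integrand, taken with the orientation of the interval, is nonnegative. Every $\sigma$ between $s$ and $t$ satisfies $\sigma\le\max\{s,t\}\le M$, so $1/\sigma\ge1/M$. Hence
\[
\psi(s,t)\ge\frac1M\left|\int_t^s(s-\sigma)\,d\sigma\right|=\frac{(s-t)^2}{2M}.
\]
The case $s=0$ follows by letting $s\to0^+$ and using continuity, or directly: $t\ge t^2/(2M)$ holds because $t\le M$.

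The only point needing care is the boundary case $s=0$, where $\partial_s^2\psi=1/s$ is singular. Continuity handles it cleanly, because the integrand $(s-\sigma)/\sigma$ remains integrable on $(0,t]$. The convention $\psi(0,0)=0$ needs no proof; it only ensures that the sums in \eqref{eq:three-sums} are well defined.
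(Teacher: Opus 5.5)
Your proof is correct and is essentially the paper's argument. The paper writes $\psi(s,t)=F(s)-F(t)-F'(t)(s-t)$ with $F(x)=x\log x$; this is your observation that $\psi(\cdot,t)$ has second derivative $1/s$ and vanishes to first order at $s=t$, and the paper then applies Taylor's inequality with $F''\ge 1/M$ on $(0,M]$ and treats $s=0$ by continuity, while your explicit integral remainder simply spells out that Taylor step (and even covers $s=0$ directly, since the integrand there is the constant $-1$).
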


\begin{proof}
For $F(x)=x\log x$, the expression in \eqref{eq:psi} is
\[
 F(s)-F(t)-F'(t)(s-t).
\]
Since $F''(x)=1/x>0$ for $x>0$, this expression is nonnegative.
On $(0,M]$, we have $F''(x)\ge1/M$, so Taylor's inequality gives
\eqref{eq:psi-quadratic}. The statements at $s=0$ follow by
continuity.
\end{proof}

\end{document}